\documentclass[a4paper,12pt]{article}%
\usepackage[T1]{fontenc}
\usepackage[english]{babel}
\usepackage[ansinew]{inputenc}
\usepackage{lmodern}
\usepackage{amsmath,amssymb,amsfonts,amstext}
\usepackage{amsthm}
\usepackage{graphicx}
\usepackage{amsmath}
\usepackage{amsfonts}
\usepackage{amssymb}
\usepackage{float}
\theoremstyle{definition}
\newtheorem{theorem}{Theorem}[section]
\newtheorem{lemma}{Lemma}[section]
\newtheorem{remark}{Remark}[section]

\numberwithin{equation}{section}
\begin{document}

\title{A new improved explicit estimate\\for $\zeta\left(  1/2+it\right)  $}
\author{Michael Revers}
\date{}
\maketitle

\begin{abstract}
In this paper, we present an improved explicit subconvexity result for the Riemann zeta function $\zeta\left( s\right)$ along the critical line $s=1/2+it$, given by Hiary, Patel and Yang in 2024. This new bound is derived by combining a refined, explicit version of the van der Corput method together with computational calculations.

\textbf{2020 MSC Classification:} 11L07, 11M06

\textbf{Keywords:} Riemann zeta function, Exponential sums, van der Corput
estimate, Lindel\"of conjecture.

\end{abstract}

\section{Introduction}

\noindent A well known problem in analytic number theory is to bound the
growth rate of the Riemann zeta function along vertical lines $s$ inside the
critical strip. Especially explicit bounds for zeta along the critical line
$s=1/2+it$ have broad implications for the distribution of the non trivial
zeros inside the critical strip. They are used to derive zero-free regions
\cite{Ford}, \cite{Mossinghoff}, \cite{Yang}, zero-density estimates
\cite{Kadiri} and also bounds on the argument of the zeta function on the
critical line \cite{Hasanalizade}, \cite{Trudgian}. Moreover, the problem has
led to deep ideas in estimating exponential sums, in particular the method of
exponential pairs, see \cite{GrahamKolesnik}, (\cite{Titchmarch}, page 116).

\medskip\noindent Appealing to the Riemann-Siegel formula, the earliest
explicit convexity bound result is due to Lehman (\cite{Lehman}, Lemma 2) who
showed that
\[
\left\vert \zeta\left(  1/2 +it\right)  \right\vert \leq\frac{4}{\left(
2\pi\right)  ^{1/4}}t^{1/4},\quad t\geq32/\pi.
\]
The Weyl-Hardy-Littlewood method, see (\cite{Titchmarch}, section 5.3) detects
a certain amount of cancellation inside the main Riemann-Siegel sum and
combined together with the van der Corput method \cite{Corput1},
\cite{Corput2} the size of the bound can be improved to $t^{1/6}\log t$.
Explicit estimates of this type that means estimates of the sort%
\[
\left\vert \zeta\left(  1/2 +it\right)  \right\vert \leq C_{1}t^{1/6}\log
t,\quad t\geq C_{2}%
\]
with effective computable values $C_{1}$ and $C_{2}$ are generally hard to
derive and moreover much harder to improve. For numerical purposes it is
necessary to have such bounds available since they offer an unconditional
measure against which one can compare large values of $\left\vert \zeta\left(
1/2+it\right)  \right\vert $ found by certain numerical algorithms. The first
available result of which we are aware is due to Cheng and Graham
\cite{ChengGraham} when they proved in 2004
\[
\left\vert \zeta\left(  1/2 +it\right)  \right\vert \leq3t^{1/6}\log t,\quad
t\geq e.
\]
This result is further improved by Platt and Trudgian \cite{PlattTrudgian} in
2015 to%
\[
\left\vert \zeta\left(  1/2 +it\right)  \right\vert \leq0.732t^{1/6}\log
t,\quad t\geq2.
\]
The next step is due to Hiary \cite{Hiary1} in 2016 when he showed that%
\[
\left\vert \zeta\left(  1/2 +it\right)  \right\vert \leq0.63t^{1/6}\log
t,\quad t\geq3.
\]
Eight years later, in 2024, Hiary, Patel and Yang \cite{Hiary2} improved this
further to%
\[
\left\vert \zeta\left(  1/2 +it\right)  \right\vert \leq0.618t^{1/6}\log
t,\quad t\geq3.
\]
Numerical computations show that $\max_{t\geq3}\left\vert \zeta\left(
1/2+it\right)  \right\vert /t^{1/6}\log t>0.5075$. This maximum is attained at
least near the values $t=45.62$, $t=63.06$ and $t=108.99$. So the leading
constant in the above mentioned four estimates cannot be reduced below
$0.5075$ without an additional assumption on the size of $t$. The exponent
$1/6$ in the above mentioned estimates is also hard to improve. One of the
sharpest result so far is due to Huxley \cite{Huxley} form 2005 who proved
that%
\[
\zeta\left(  1/2 +it\right)  \ll t^{32/205}\log^{\gamma}t
\]
for some constant $\gamma$. Note $32/205=0.15609\ldots$. The actual sharpest
bound is due Bourgain \cite{Bourgain} from 2016, when he showed that for every
$\varepsilon>0$ we have
\[
\zeta\left(  1/2 +it\right)  \ll t^{13/84+\varepsilon}.
\]
Note $13/84=0.1547\ldots$. According to the Lindel\"of hypothesis
\cite{Lindelof}, the growth rate is expected to be slower than any fixed power
of $t$ that means%
\[
\zeta\left(  1/2 +it\right)  \ll t^{\varepsilon}%
\]
for every $\varepsilon>0$. Thus, all recorded subconvexity bounds in the
literature appear to be far away from the truth, at least for sufficiently
large values of $t$.

\medskip\noindent Now, let us turn again to the results on the explicit
subconvexity bounds obtained by Hiary \cite{Hiary1} and Platt-Trudgian
\cite{PlattTrudgian}. The accompanying stories around these results take a
dramatic turn. As has recently been pointed out by K. Ford and also discussed
in a preprint by J. Arias de Reyna, see also in more detail Patel
(\cite{Patel}, section 2), both results are affected by an incorrect
computational constant originating from a fatal flaw in the proof of the so
called Cheng-Graham lemma. In (\cite{ChengGraham}, Lemma 2) a certain
exponential sum is estimated by the value $1/\left(  \pi\theta\right)  +1$
though the constant $1/\pi$ in this bound should be $2/\pi$. Now, following
the computations in \cite{Hiary1} using the constant $2/\pi$ in place of the
erroneous $1/\pi$ then one arrives at the constant 0.77, which differs greatly
from the constant $0.63$ in the Hiary result from 2016. It becomes
unmistakable that the true value of the newer result by Hiary, Patel and Yang
\cite{Hiary2} from 2024 must be seen not only in the much better constant
0.618 but more important that it leaves intact all works in the literature
that depend directly or indirectly on \cite{Hiary1} or on \cite{PlattTrudgian}.

\medskip\noindent This paper has two objectives. Firstly, we aim to improve
the current best explicit bound further. Secondly, we aim to supplement the
original proof with important details that we believe are missing from the
paper \cite{Hiary2}. In addition, to estimate inside the intermediate region,
where t is too large for the Riemann-Siegel bound but too small for the van
der Corput bound, extensive computational calculations are required.
These calculations are performed in subsection \ref{ComputationalBottleneck}.

\medskip\noindent Generally to say, proofs on explicit bounds remain much more
fragile compared to other mathematical results, because the main outcome is a
single number. This is ultimately why we present an extensive and detailed
proof, going into more detail than is strictly necessary, just to demonstrate evidence of
correctness and ensure nothing important is missed.

\begin{theorem}
\label{theorem1}\mbox{}

\noindent(a) For $t\geq3$ we have
\[
\left\vert \zeta\left(  1/2+it\right)  \right\vert \leq0.611t^{1/6}\log t.
\]
(b) For $t\geq8.97\cdot10^{17}$ we have
\[
\left\vert \zeta\left(  1/2+it\right)  \right\vert \leq0.566t^{1/6}\log t.
\]

\end{theorem}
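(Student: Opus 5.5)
The plan follows the Hiary--Patel--Yang architecture, with three ranges of $t$ handled by different tools. Throughout, write $N=\lfloor\sqrt{t/(2\pi)}\rfloor$.

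\emph{Starting point.} By the Riemann--Siegel formula,
\[
|\zeta(1/2+it)|\le 2\Big|\sum_{n\le N} n^{-1/2-it}\Big| + R(t),
\]
where $R(t)$ is an explicit remainder of size $O(t^{-1/4})$, using the bounds of Gabcke type.

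\emph{Small $t$.} For $3\le t\le T_0$, with $T_0$ moderate (say $10^{11}$ or so), I would rely on direct numerical verification. Rigorous evaluation of $\zeta$ on a fine grid, combined with a derivative bound to interpolate between grid points, handles the lower part. On the upper part a cruder bound such as Lehman's $4(2\pi)^{-1/4}t^{1/4}$ is not enough, since it beats $0.611t^{1/6}\log t$ only for small $t$. So on that upper part I would instead use a bound $|\sum_{n\le N}n^{-1/2-it}|\le \sum_{n\le n_0}n^{-1/2}+\ldots$, mixing a trivial head with a van der Corput bounded tail.

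\emph{Large $t$.} The main analytic step splits the Riemann--Siegel sum dyadically (or into blocks $[M,2M)$) into three pieces:
\begin{itemize}
\item a head $n\le n_1\asymp t^{1/6}$, bounded trivially by $2\sqrt{n_1}$;
\item a middle range where the second-derivative (Kusmin--Landau/van der Corput) test applies, with $|f''|\asymp t/M^2$;
\item a tail $t^{1/3}\lesssim M\le N$, where I apply the third-derivative test.
\end{itemize}
The third-derivative test is used in an explicit form: Weyl differencing with a parameter $H$, followed by the corrected Cheng--Graham lemma with constant $2/\pi$, giving
\[
\Big|\sum_{M<n\le M+L} e(f(n))\Big|^2\le \frac{(L+H)}{H}\Big(L+2\sum_{h<H}\Big|\sum e(f(n+h)-f(n))\Big|\Big).
\]
The inner sums are bounded by the second-derivative estimate, using $\lambda_3\asymp t/M^3$. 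Partial summation then removes the weights $n^{-1/2}$. Each block contributes $\ll t^{1/6}$, and there are $\asymp\log t$ blocks, so the total is $C\,t^{1/6}\log t$ plus explicit lower-order terms.

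\emph{Optimising constants.} I would optimise $H$ per block, choose non-dyadic block ratios (e.g.\ geometric ratio $r$ tuned numerically), and place the transition points $n_1$ and $t^{1/3}$ optimally. This should produce a leading coefficient below $0.566$ once the lower-order terms are negligible, i.e.\ for $t\ge 8.97\cdot 10^{17}$, which gives part (b). For part (a), the same bound with all lower-order terms kept explicitly is evaluated as a function of $t$ and shown to be $\le 0.611t^{1/6}\log t$ for $t\ge T_1$, with $T_1$ below the verified range.

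\emph{Main obstacle.} I expect the hardest part to be the intermediate region $T_0\le t\le T_1$. There the lower-order terms of the van der Corput bound still dominate, and a purely analytic inequality fails. I would handle it by a computational scan over $t$ in small intervals. On each interval I would compute the explicit bound with parameters (block endpoints, $H$, $n_1$) re-optimised per interval, using monotonicity in $t$ of each term to make the scan rigorous. Where that still fails, I would fall back to computing the partial sums $\sum_{n\le n_2}n^{-1/2-it}$ exactly over a larger head instead of bounding them trivially.
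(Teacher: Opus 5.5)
Your outline has the right architecture: Riemann--Siegel, a trivially bounded head, Weyl differencing with a second-derivative/Kusmin--Landau bound on the differenced phase, partial summation, and a piecewise-in-$t$ treatment of an intermediate range. But the statement is purely quantitative, and your plan never produces a constant. Several of the steps you do specify would not deliver the required numbers.

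The ``middle range'' step fails outright. For $n\le\sqrt{t/2\pi}$ one has $|f''(n)|=t/(2\pi n^{2})\ge 1$, and $f'$ sweeps through many integers. So neither the second-derivative test nor Kusmin--Landau, applied to $f(x)=\frac{t}{2\pi}\log x$ itself, beats the trivial bound anywhere in the Riemann--Siegel sum. The paper bounds everything up to $Kr_0-1\approx 4t^{\phi}$ trivially; this is the term $a_3t^{\phi/2}=M_6t^{\phi/2}\approx 8t^{\phi/2}$ in (\ref{CorputSum1}). It uses the second-derivative test only on $g(x)=f(x+m)-f(x)$, where $m/W\le g''\le\lambda m/W$ is small.

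Your differencing inequality also drops the Fej\'er weights $(1-h/H)$ of Lemma \ref{Weyl}, and these matter. The bound $\sum_{m}(1-m/M)m^{1/2}\le\frac{4}{15}M^{3/2}$ becomes roughly $\frac23M^{3/2}$, a factor $5/2$ in the dominant term of $\alpha$. After re-optimising $c$, in the paper's set-up this alone raises the leading coefficient $b_1$ from $0.469$ to about $0.64$, above both $0.611$ and $0.566$.

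You also misread where the threshold in (b) comes from. At $t=8.97\cdot10^{17}$ the lower-order terms are not negligible. The paper's bound there is $0.469028\,t^{1/6}\log t+3.99399\,t^{1/6}-21.4623$, and the $t^{1/6}$ term still adds about $0.097$ to the effective constant. The threshold is therefore governed by the lower-order terms, and a leading coefficient near $0.47$ is needed, not merely one below $0.566$.

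In the paper that coefficient comes from ingredients absent from your sketch:
\begin{itemize}
\item blocks of constant length $K=\lceil t^{\phi}\rceil$, so that $\sum_{r}1/\sqrt{r(r+1)}\le(\frac12-\phi)\log t+M_5$;
\item a differencing length $M=\lceil cW^{1/3}\rceil$ adapted to each block;
\item the sharpened second-derivative test of Lemmas \ref{SecondderivativePart1}--\ref{SecondderivativePart2}, whose boundary analysis via $P(\varepsilon)$ yields the negative constant $1-6/\pi$.
\end{itemize}
Without these, ``re-optimise per interval'' cannot be expected to close the intermediate range, where the margins are razor thin. At $t=3.8\cdot10^{7}$ the paper's bound evaluates to about $195.52$ against an allowed $195.53$.

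Conversely, your small-$t$ plan is much heavier than needed. The triangle-inequality Riemann--Siegel bound can be sharpened to $4(2\pi)^{-1/4}t^{1/4}-2.8805$ by evaluating $\sum_{n\le 2291}n^{-1/2}$ exactly and using Gabcke's remainder. That already gives the result up to $3.8\cdot10^{7}$, so rigorous evaluation of $\zeta$ is needed only for $t\le 200$, not on a grid up to $10^{11}$.
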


\subsection{Notation}

\noindent Throughout this paper we denote by $e\left(  x\right)  = \exp\left(
2\pi ix\right)  $. Further $\left\lfloor .\right\rfloor $ and $\left\lceil
.\right\rceil $ have their usual meanings that is $\left\lfloor x\right\rfloor
$ denotes the floor function with $\left\lfloor x\right\rfloor =\max\left\{
m\in\mathbb{Z}:m\leq x\right\}  $ and $\left\lceil x\right\rceil $ denotes the
ceiling function with $\left\lceil x\right\rceil =\min\left\{  n\in
\mathbb{Z}:n\geq x\right\}  .$

\section{Preliminaries}

\label{Preliminaries}

\begin{lemma}
[Computational Bound]\label{ComputationalBound} For $3\leq t\leq200$ we have%
\[
\left\vert \zeta\left(  1/2 +it\right)  \right\vert \leq 0.611 t^{1/6}\log t.
\]

\end{lemma}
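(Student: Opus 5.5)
This is a finite-range statement, so I will verify it by a rigorous computation rather than by analytic estimates. The plan is to bound $\left\vert \zeta\left(1/2+it\right)\right\vert$ on $[3,200]$ using validated numerics: interval arithmetic or a certified Riemann--Siegel/Euler--Maclaurin evaluation with explicit error terms. I then compare the result with $0.611\,t^{1/6}\log t$. The ratio $F(t)=\left\vert\zeta(1/2+it)\right\vert/(t^{1/6}\log t)$ is known numerically to peak at about $0.5075$ near $t=45.62$, $63.06$ and $108.99$. This leaves a comfortable margin of roughly $0.1$, and that slack is what makes a coarse certified grid feasible.

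\textbf{Step 1 (pointwise evaluation).} For each grid point $t_j$ I compute $\zeta(1/2+it_j)$ with a certified error bound. One option is Euler--Maclaurin summation with explicit remainder, which is entirely adequate for $t\leq 200$ with $N\approx t/(2\pi)+10$ terms and a handful of Bernoulli correction terms. Alternatively I would use the Hardy $Z$-function, $\left\vert\zeta(1/2+it)\right\vert=\left\vert Z(t)\right\vert$, via the Riemann--Siegel formula with the Gabcke remainder bounds, valid for $t\geq 2\pi$ or so. For the short piece $[3,2\pi]$, Euler--Maclaurin is used in any case.

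\textbf{Step 2 (interpolation between grid points).} To pass from grid values to the whole interval, I need a bound on the derivative. For $t\in[t_j,t_{j+1}]$,
\[
\left\vert \zeta(1/2+it)\right\vert \leq \left\vert \zeta(1/2+it_j)\right\vert + (t_{j+1}-t_j)\max_{u\in[t_j,t_{j+1}]}\left\vert \zeta'(1/2+iu)\right\vert .
\]
I would bound $\left\vert\zeta'\right\vert$ crudely but explicitly on $[3,200]$. One route is the same Euler--Maclaurin expression differentiated term by term, which gives something like $\left\vert\zeta'(1/2+iu)\right\vert\leq C\sqrt{u}\log u$ with a modest explicit $C$. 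Another is Cauchy's estimate on a small disc, using a convexity bound such as Lehman's $\left\vert\zeta\right\vert\leq 4(2\pi)^{-1/4}t^{1/4}$ extended slightly off the line. Since $t^{1/6}\log t$ is increasing on $[3,200]$, the right-hand side $0.611\,t^{1/6}\log t$ is bounded below on each subinterval by its value at $t_j$. It therefore suffices to check
\[
\left\vert\zeta(1/2+it_j)\right\vert+\text{err}_j+h_j M_j\leq 0.611\,t_j^{1/6}\log t_j .
\]

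\textbf{Step 3 (choice of step size) and main obstacle.} The delicate part is the lower end near $t=3$. There $t^{1/6}\log t\approx 1.32$, so the right-hand side is only about $0.81$, while $\left\vert\zeta(1/2+3i)\right\vert$ is around $0.6$–$0.7$. The margin is therefore much thinner than near the peaks, and the step size $h_j$ must be chosen adaptively, small near $t=3$ and near the local maxima of $F$, and larger elsewhere. I expect that step sizes of order $10^{-3}$ to $10^{-4}$ suffice throughout, giving at most a few million certified evaluations. This is routine with interval arithmetic software such as Arb or mpmath with rigorous error control. With that step in place the inequality holds at every point, which proves the lemma.
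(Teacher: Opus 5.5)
Your plan is correct and matches the paper, which simply cites Hiary's certified computation (Euler--Maclaurin summation implemented in interval arithmetic, giving even the constant $0.595$ on $[3,200]$). One small aside: $|\zeta(1/2+3i)|\approx 0.54$, not $0.6$--$0.7$, so the ratio at $t=3$ is about $0.41$ against $0.611$ and the low end is not really more delicate than the rest of the range, but this does not affect your argument.
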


\begin{proof}
The proof relies on an implemented Euler-MacLaurin formula using interval
arithmetic and is carried out in (\cite{Hiary1}, Lemma 2.2). Note, that here
one can take the better constant $0.595$, see (\cite{Hiary1}, page 528, formula 15).
\end{proof}

\begin{lemma}
[Riemann-Siegel Bound]\label{RiemannSiegel}\mbox{}

\noindent(a) For $t\geq200$ we have
\[
\left\vert \zeta\left(  1/2 +it\right)  \right\vert \leq\frac{4t^{1/4}%
}{\left(  2\pi\right)  ^{1/4}}-2.0895.
\]
(b) For $200\leq t\leq3.3\cdot10^{7}$ we have
\begin{align*}
\left\vert \zeta\left(  1/2 +it\right)  \right\vert  &  \leq\frac{4t^{1/4}%
}{\left(  2\pi\right)  ^{1/4}}-2.0895\\
&  \leq 0.611t^{1/6}\log t.
\end{align*}
(c) For $3.3\cdot10^{7}\leq t\leq3.8\cdot10^{7}$ we have
\begin{align*}
\left\vert \zeta\left(  1/2 +it\right)  \right\vert  &  \leq\frac{4t^{1/4}%
}{\left(  2\pi\right)  ^{1/4}}-2.8805\\
&  \leq 0.611t^{1/6}\log t.
\end{align*}
\end{lemma}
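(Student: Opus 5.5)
The plan is to start from the Riemann--Siegel formula in the form with an explicit remainder, as in Lehman (\cite{Lehman}, Lemma 2) and its refinement in \cite{Hiary1}. Put $a=\sqrt{t/(2\pi)}$ and $n_1=\lfloor a\rfloor$. Then
\[
Z(t)=2\sum_{n\le n_1}n^{-1/2}\cos\left(\vartheta(t)-t\log n\right)+R(t),
\]
where $|R(t)|\le 1.09\,t^{-1/4}$ for $t\ge200$ (Gabcke's explicit remainder bound). The trivial bound on the main sum is $2\sum_{n\le n_1}n^{-1/2}$. For this we use the sharp estimate
\[
\sum_{n\le N}n^{-1/2}\le 2\sqrt N+\zeta(1/2)+\tfrac12 N^{-1/2},
\]
which follows from Euler--Maclaurin and $\zeta(1/2)=-1.4603\ldots$. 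With $N=n_1\le a$ this gives
\[
|\zeta(1/2+it)|\le 4a^{1/2}+2\zeta(1/2)+n_1^{-1/2}+1.09\,t^{-1/4}.
\]
Since $4a^{1/2}=4t^{1/4}/(2\pi)^{1/4}$, we get $2\zeta(1/2)=-2.9206$, and the correction terms are small for $t\ge200$.

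For part (a), take $t\ge200$. Then $n_1\ge5$, so $n_1^{-1/2}\le 5^{-1/2}=0.4472$ and $1.09\,t^{-1/4}\le0.290$. Together these give $-2.9206+0.4472+0.290\le-2.18<-2.0895$. The slack allows cruder remainder constants if necessary. Strictly, one must bound $n_1^{-1/2}$ separately on each range $n_1=5,6,\dots$ and check the constant on the whole range. That is a routine monotonicity check.

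Part (c) is the same estimate restricted to $3.3\cdot10^7\le t\le3.8\cdot10^7$. There $n_1\ge\lfloor\sqrt{3.3\cdot10^7/(2\pi)}\rfloor=2291$, so $n_1^{-1/2}\le0.0209$ and $1.09\,t^{-1/4}\le0.0145$. The constant becomes $-2.9206+0.0354\le-2.8852$, which is at most $-2.8805$.

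The second inequalities in (b) and (c) reduce to checking $f(t)=0.611t^{1/6}\log t-4(2\pi)^{-1/4}t^{1/4}+c\ge0$ on the stated interval, with $c=2.0895$ or $2.8805$. Substitute $u=\log t$. The difference is $0.611u e^{u/6}-2.5264e^{u/4}+c$. The ratio $e^{u/4}/(ue^{u/6})=e^{u/12}/u$ is decreasing for $u<12$ and increasing for $u>12$. So on $[\log200,\log 3.3\cdot10^7]\approx[5.30,17.31]$ the worst points are the endpoints and the growth near the upper end. Near $t=3.3\cdot10^7$ we compute $t^{1/6}\approx17.9$, $\log t\approx17.31$, so the left side is about $189.3$. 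Also $2.5264t^{1/4}\approx2.5264\cdot75.8\approx191.5$, and subtracting $2.0895$ gives about $189.4$. This is the razor-thin point that fixes the endpoint $3.3\cdot10^7$, and it is why the better constant $2.8805$ is needed to reach $3.8\cdot10^7$. Because $e^{u/12}/u$ is increasing past $u=12$, it suffices to verify the inequality at the right endpoint and at $u=12$ (or on a fine grid, using the derivative bound). At $t=200$ the margin is large.

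The main obstacle is this numerical tightness at the upper endpoints of (b) and (c). Verifying it needs high-precision evaluation of $f$ at the endpoints, together with a rigorous argument that $f$ has no interior zero. For that I would show that $f'$ changes sign at most once on each interval, using the unimodality of $e^{u/12}/u$, and then check $f$ at the endpoints with interval arithmetic.
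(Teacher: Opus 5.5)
Your route is the paper's: the Riemann--Siegel formula with Gabcke's remainder, the trivial bound on the main sum, and $\sqrt{n_1}\le (t/2\pi)^{1/4}$. The gap is the remainder constant. The bound $|R(t)|\le 1.09\,t^{-1/4}$ is not Gabcke's, and it is false. The first correction term is $(-1)^{n_1-1}(2\pi/t)^{1/4}\Psi(p)$, where $p=\sqrt{t/2\pi}-n_1$ and $\Psi(p)=\cos(2\pi(p^2-p-1/16))/\cos(2\pi p)$. Gabcke's $0.127\,t^{-3/4}$ bounds only what is left after this term. Since $\Psi(0)=\cos(\pi/8)$, the usable bound is $\cos(\pi/8)(2\pi)^{1/4}t^{-1/4}+0.127\,t^{-3/4}\approx 1.4627\,t^{-1/4}+0.127\,t^{-3/4}$, which is what the paper uses. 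The constant $1.4627$ cannot be lowered, because $t^{1/4}|R(t)|\to\cos(\pi/8)(2\pi)^{1/4}$ along $t=2\pi N^2$.

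With the correct remainder, part (a) fails at the bottom of the range. For $t=200$ and $n_1=5$ your estimate gives $2\zeta(1/2)+5^{-1/2}+0.38896+0.00239\approx-2.08215$, which is not $\le-2.0895$. The claimed slack does not exist. At $N=5$ your Euler--Maclaurin bound gives $3.23539$, against the exact value $\sum_{n\le 5}n^{-1/2}=3.23167$. After doubling this loses $0.0074$, while the entire margin in the paper's constant $-2.08958$ is below $10^{-4}$. The per-range check you mention for $n_1=5$ would expose the failure, not cure it.

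There are two repairs:
\begin{itemize}
\item Follow the paper: sum $n\le5$ exactly and compare $n\ge6$ with $\int_5^{n_1}x^{-1/2}\,dx$. This gives $2\sum_{n\le5}n^{-1/2}-4\sqrt5=-2.48093$ and hence the constant $-2.08958$.
\item Keep the discarded term $4(t/2\pi)^{1/4}-4\sqrt5\ge0.55$ on $200\le t<72\pi$, and use your bound only for $n_1\ge6$, where it gives at most $-2.133$.
\end{itemize}

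Part (c) survives the corrected remainder: $-2.920709+0.020892+0.019299=-2.88052$, exactly the paper's constant. The margin, however, is about $2\cdot10^{-5}$, not $0.005$.

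For the second inequalities, your rough figures at $t=3.3\cdot10^7$ ($189.3$ against $189.4$) would actually indicate failure. Accurate values are $189.441$ against $189.399$. The reduction to endpoints is cleanest through
$f'(u)=e^{u/6}\bigl(0.611(1+u/6)-(2\pi)^{-1/4}e^{u/12}\bigr)$. The bracket is concave and positive at $u=\log 200$, so $f$ increases and then decreases, and its minimum on each interval lies at an endpoint.
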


\begin{proof}
\noindent(a) We start with the Riemann-Siegel formula and apply the triangle
inequality to the main sum and to the Gabke remainder terms \cite{Gabke},
following the procedure as in \cite{Hiary1}. For $t\geq200$ and $n_{1}%
=\left\lfloor \sqrt{t/2\pi}\right\rfloor \geq5$ we have%
\begin{align*}
&  \left\vert \zeta\left(  1/2+it\right)  \right\vert \\
&  \leq2\left\vert \sum_{n=1}^{n_{1}}n^{-1/2+it}\right\vert +\cos\frac{\pi}%
{8}\cdot\left(  2\pi\right)  ^{1/4}t^{-1/4}+0.127t^{-3/4}\\
&  \leq2\sum_{n=1}^{5}\frac{1}{\sqrt{n}}+2\int_{5}^{n_{1}}\frac{1}{\sqrt{x}%
}dx+\cos\frac{\pi}{8}\cdot\left(  2\pi\right)  ^{1/4}t^{-1/4}+0.127t^{-3/4}\\
&  =2\sum_{n=1}^{5}\frac{1}{\sqrt{n}}+4\sqrt{n_{1}}-4\sqrt{5}+\cos\frac{\pi
}{8}\cdot\left(  2\pi\right)  ^{1/4}t^{-1/4}+0.127t^{-3/4}\\
&  \leq\frac{4t^{1/4}}{\left(  2\pi\right)  ^{1/4}}+2\sum_{n=1}^{5}\frac
{1}{\sqrt{n}}-4\sqrt{5}+\cos\frac{\pi}{8}\cdot\left(  2\pi\right)  ^{1/4}%
\cdot200^{-1/4}+0.127\cdot200^{-3/4}\\
&  =\frac{4t^{1/4}}{\left(  2\pi\right)  ^{1/4}}-2.08958\ldots\leq
\frac{4t^{1/4}}{\left(  2\pi\right)  ^{1/4}}-2.0895.
\end{align*}
(b) For $200\leq t\leq3.3\cdot10^{7}$, applying some standard calculations
together with (a), reveal that%
\[
\frac{4t^{1/4}}{\left(  2\pi\right)  ^{1/4}}-2.0895\leq 0.611 t^{1/6}\log t.
\]
(c) Let now $3.3\cdot10^{7}\leq t\leq3.8\cdot10^{7}$ and $t_{0}=3.3\cdot
10^{7}$. First we observe that%
\[
n_{1}=\left\lfloor \sqrt{\frac{t}{2\pi}}\right\rfloor \geq\left\lfloor
\sqrt{\frac{t_{0}}{2\pi}}\right\rfloor =\left\lfloor 2291.75\ldots\right\rfloor
=2291.
\]
Following the procedure in a similar way as in (a), we obtain%
\begin{align*}
&  \left\vert \zeta\left(  1/2+it\right)  \right\vert \\
&  \leq2\left\vert \sum_{n=1}^{n_{1}}n^{-1/2+it}\right\vert +\cos\frac{\pi}%
{8}\cdot\left(  2\pi\right)  ^{1/4}t^{-1/4}+0.127t^{-3/4}\\
&  \leq2\sum_{n=1}^{2291}\frac{1}{\sqrt{n}}+2\int_{2292-1/2}^{n_{1}+1/2}%
\frac{dx}{\sqrt{x}}+\cos\frac{\pi}{8}\cdot\left(  2\pi\right)  ^{1/4}%
t_{0}^{-1/4}+0.127t_{0}^{-3/4}\\
&  =2\sum_{n=1}^{2291}\frac{1}{\sqrt{n}}+2\int_{2292-1/2}^{n_{1}}\frac
{dx}{\sqrt{x}}+2\int_{n_{1}}^{n_{1}+1/2}\frac{dx}{\sqrt{x}}\\
&  +\cos\frac{\pi}{8}\cdot\left(  2\pi\right)  ^{1/4}t_{0}^{-1/4}%
+0.127t_{0}^{-3/4}\\
&  \leq\frac{4t^{1/4}}{\left(  2\pi\right)  ^{1/4}}+2\sum_{n=1}^{2291}\frac
{1}{\sqrt{n}}-4\sqrt{2291.5}+\frac{1}{\sqrt{2291}}\\
&  +\cos\frac{\pi}{8}\cdot\left(  2\pi\right)  ^{1/4}t_{0}^{-1/4}%
+0.127t_{0}^{-3/4}\\
&  \leq\frac{4t^{1/4}}{\left(  2\pi\right)  ^{1/4}}-2.88052\ldots\leq\frac{4t^{1/4}%
}{\left(  2\pi\right)  ^{1/4}}-2.8805.
\end{align*}
By some standard arguments, we finally obtain for our range for $t$%
\[
\frac{4t^{1/4}}{\left(  2\pi\right)  ^{1/4}}-2.8805\leq 0.611 t^{1/6}\log t.
\]
\end{proof}

\begin{lemma}
[Refined Kusmin-Landau Bound]\label{KusminLandau}Let $f:\left[  a,b\right]
\rightarrow\mathbb{R}$ with a monotonic and continuous derivative on $\left[
a,b\right]  $ satisfying
\[
z+U\leq f^{\prime}\left(  x\right)  \leq z+V.\quad x\in\left[  a,b\right]
\]
for some integer $z$ and $0<U\leq V<1.$ Then
\[
\left\vert \sum_{a\leq n\leq b}e\left(  f\left(  n\right)  \right)
\right\vert \leq\frac{1}{\pi}\left(  \frac{1}{U}+\frac{1}{1-V}\right)  .
\]

\end{lemma}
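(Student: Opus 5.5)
We follow the classical Kusmin--Landau argument via Abel summation, keeping the two ends of the derivative interval separate. Replacing $f(x)$ by $f(x)-zx$ changes no term $e(f(n))$, so we may assume $z=0$ and $0<U\le f'(x)\le V<1$ on $[a,b]$. Let the integers in $[a,b]$ be $n=A,A+1,\dots,B$ and write $\Delta_n=f(n+1)-f(n)$. By the mean value theorem $\Delta_n=f'(\xi_n)\in[U,V]$. Since $f'$ is monotonic, the sequence $\Delta_n$ is monotonic in $n$.

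The basic identity is
\[
e(f(n))=\frac{e(f(n+1))-e(f(n))}{e(\Delta_n)-1},
\]
and we set $g_n=1/(e(\Delta_n)-1)$. A short computation gives
\[
g_n=-\frac12-\frac{i}{2}\cot(\pi\Delta_n).
\]
Summation by parts then yields
\[
\sum_{n=A}^{B}e(f(n)) \;=\; \sum_{n=A}^{B-1} g_n\bigl(e(f(n+1))-e(f(n))\bigr)\;+\;e(f(B))
\]
\[
=\; -g_A e(f(A)) \;+\; \sum_{n=A+1}^{B-1}(g_{n-1}-g_n)e(f(n)) \;+\; (g_{B-1}+1)\,e(f(B)).
\]

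The key observation is that the constant real part $-1/2$ of $g_n$ cancels in every difference $g_{n-1}-g_n$. These differences are therefore purely imaginary, namely $\tfrac{i}{2}\bigl(\cot\pi\Delta_n-\cot\pi\Delta_{n-1}\bigr)$. Because $\cot$ is monotone on $(0,\pi)$ and $\Delta_n$ is monotone, all of these differences have the same sign, and the sum of their absolute values telescopes to
\[
\tfrac12\bigl|\cot\pi\Delta_A-\cot\pi\Delta_{B-1}\bigr|.
\]
The boundary terms have modulus
\[
|g_A|=\tfrac{1}{2\sin\pi\Delta_A},\qquad |g_{B-1}+1|=\Bigl|\tfrac12-\tfrac i2\cot\pi\Delta_{B-1}\Bigr|=\tfrac{1}{2\sin\pi\Delta_{B-1}}.
\]
Assume without loss of generality that $\Delta$ is increasing; otherwise apply the conjugation $f\mapsto -f$, which exchanges the roles of $U$ and $1-V$. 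Writing $\alpha=\pi\Delta_A$ and $\beta=\pi\Delta_{B-1}$, this gives
\[
\Bigl|\sum e(f(n))\Bigr|\le \frac{1}{2\sin\alpha}+\frac{1}{2\sin\beta}+\frac12(\cot\alpha-\cot\beta),\qquad \pi U\le\alpha\le\beta\le \pi V.
\]

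The remaining step, which is the main obstacle, is to bound this expression by $\frac1\pi\bigl(\frac1U+\frac1{1-V}\bigr)$ without losing a factor of 2. For this we use the half-angle identities
\[
\frac{1+\cos\alpha}{2\sin\alpha}=\frac12\cot\frac{\alpha}{2},\qquad \frac{1-\cos\beta}{2\sin\beta}=\frac12\tan\frac{\beta}{2}.
\]
Hence the bound equals
\[
\tfrac12\cot(\alpha/2)+\tfrac12\tan(\beta/2).
\]
The first term is decreasing in $\alpha$ and the second is increasing in $\beta$, so the bound is at most $\tfrac12\cot(\pi U/2)+\tfrac12\cot(\pi(1-V)/2)$. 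Finally, the elementary inequality $\cot x\le 1/x$ for $0<x<\pi/2$ gives
\[
\Bigl|\sum e(f(n))\Bigr|\le\frac{1}{\pi U}+\frac{1}{\pi(1-V)}.
\]

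The degenerate cases are handled directly. If $[a,b]$ contains only one integer, the sum has modulus $1$, and $1\le 2/\pi\cdot 2$ holds since $1/U+1/(1-V)\ge 4$ (indeed $U\le V$ gives $\frac1U+\frac1{1-V}\ge \frac1V+\frac1{1-V}\ge 4$). If it contains two integers, the estimate still follows from the same formula with $\alpha=\beta$.
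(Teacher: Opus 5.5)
Your proof is correct. It is the classical Kusmin--Landau partial-summation argument, which is what the paper relies on: it gives no proof of its own and defers to Lemma 2.3 of Hiary--Patel--Yang, a corrected and generalized Cheng--Graham/Kusmin--Landau lemma. Your half-angle step also yields the sharper intermediate bound $\frac12\cot(\pi U/2)+\frac12\cot(\pi(1-V)/2)$, and your reductions (subtracting $zx$, and working only with the integers $A,\dots,B$ in $[a,b]$) directly cover the integer shift and the closed/open-interval variants that the paper handles only in its remarks.
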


\begin{proof}
\noindent This is Lemma 2.3 in \cite{Hiary2} with a slight modified version, here mentioned as the generalized
Cheng-Graham lemma.
\end{proof}

\begin{remark}
\label{KusminRemark} From the original proofs for Lemma 2.3 in \cite{Hiary2}
it follows that we can directly change the summation to closed intervals
$\left[  a,b \right]  $ as well as to open intervals $\left(  a,b \right)  $
instead of $\left[  a,b \right)  $ since the bound in this Lemma is
independent of the length of the summation. See also (\cite{Hiary2}, page 213).
\end{remark}

\begin{remark}
We also mention that the proof by Cheng-Graham \cite{ChengGraham} follows the
original proofs from Kusmin \cite{Kusmin} and Landau \cite{Landau}. The more
or less trivial generalization can be seen in the supplement of the integer
$z$ and in the extension to asymmetric boundary values $U$ and $V$. For this
reason we renamed the Lemma following the historic line.
\end{remark}

\begin{lemma}
[Weyl differentiation]\label{Weyl}For $f:\left[  N+1,N+L\right]
\rightarrow\mathbb{R}$ and positive integers $L$ and $M$ we have%
\[
\left\vert \sum_{n=N+1}^{N+L}e\left(  f\left(  n\right)  \right)  \right\vert
^{2}\leq\frac{L+M-1}{M}\left(  L+2\sum_{m=1}^{M}\left(  1-\frac{m}{M}\right)
\left\vert s_{m}^{\prime}\left(  L\right)  \right\vert \right)  ,
\]
where
\[
s_{m}^{\prime}\left(  L\right)  =\sum_{r=N+1}^{N+L-m}e\left(  f\left(
m+r\right)  -f\left(  r\right)  \right)  .
\]
If $m \geq L$, then $s_{m}^{\prime}\left(  L\right)  $ is an empty sum with
value $0$.
\end{lemma}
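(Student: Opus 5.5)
We prove this version of the Weyl--van der Corput inequality by spreading the sum over shifted windows and applying Cauchy--Schwarz. Write $S=\sum_{n=N+1}^{N+L}e(f(n))$ and extend $f$ trivially by setting $a_n=e(f(n))$ for $N+1\le n\le N+L$ and $a_n=0$ otherwise. Then for each $h=0,1,\dots,M-1$ we have $S=\sum_{n}a_{n+h}$, where $n$ runs over integers. Averaging over $h$ gives
\[
MS=\sum_{n}\sum_{h=0}^{M-1}a_{n+h}.
\]
The inner sum is nonzero only for $n$ in the range $N+2-M\le n\le N+L$, which contains exactly $L+M-1$ integers.

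We first apply Cauchy--Schwarz in $n$:
\[
M^{2}|S|^{2}\le (L+M-1)\sum_{n}\Bigl|\sum_{h=0}^{M-1}a_{n+h}\Bigr|^{2}.
\]
Next we expand the square as $\sum_{h,h'}\sum_n a_{n+h}\overline{a_{n+h'}}$. The diagonal terms $h=h'$ contribute $M\sum_n|a_n|^2=ML$. For each $m=h-h'\ne 0$ the number of pairs $(h,h')$ with that difference is $M-|m|$, and the inner sum depends only on $m$. Substituting $r=n+h'$, it equals $\sum_r a_{r+m}\overline{a_r}$. For $m>0$ this is exactly $s'_m(L)=\sum_{r=N+1}^{N+L-m}e(f(m+r)-f(r))$, since both $r$ and $r+m$ must lie in $[N+1,N+L]$. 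For $m<0$ the sum is the complex conjugate of the corresponding term with $-m$.

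This gives
\[
\sum_{n}\Bigl|\sum_{h}a_{n+h}\Bigr|^{2}=ML+2\,\mathrm{Re}\sum_{m=1}^{M-1}(M-m)s'_m(L)\le ML+2\sum_{m=1}^{M}(M-m)|s'_m(L)|,
\]
where the $m=M$ term has weight zero and so may be added freely. Dividing by $M^2$ yields
\[
|S|^{2}\le\frac{L+M-1}{M}\left(L+2\sum_{m=1}^{M}\left(1-\frac{m}{M}\right)|s'_m(L)|\right),
\]
which is the stated inequality. If $m\ge L$, the range $N+1\le r\le N+L-m$ is empty, consistent with the convention $s'_m(L)=0$.

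The only delicate point is the bookkeeping: the count $L+M-1$ of shifted positions, and the multiplicity $M-m$ of each difference $m$. Both are elementary once the zero extension of $a_n$ is used.
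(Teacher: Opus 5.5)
Your proof is correct. The zero extension, the count of $L+M-1$ shifted positions, the multiplicity $M-m$ for each difference, the conjugate pairing for negative $m$, and the zero-weight $m=M$ term are all handled properly. The paper gives no argument of its own; it cites Lemma 2.4 of \cite{Hiary2}, which is the standard van der Corput shift-and-Cauchy--Schwarz inequality you reproduce, so your route is essentially the intended one.
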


\begin{proof}
\noindent This is Lemma 2.4 in \cite{Hiary2}.
\end{proof}

\begin{lemma}
\label{Functionestimate}Let $m,r,L,K\geq2$ be positive integers and $t,K_{0}$
be positive numbers with $K \geq K_{0}>1$. Further we define
\begin{align*}
f\left(  x\right)   &  =\frac{t}{2\pi}\log\left(  Kr+x\right)  ,\\
g\left(  x\right)   &  =f\left(  m+x\right)  -f\left(  x\right)  ,\\
W  &  =\frac{\pi\left(  r+1\right)  ^{3}K^{3}}{t},\\
\lambda &  =\frac{\left(  r+1\right)  ^{3}}{r^{3}},\\
\mu &  =\frac{1}{2}\lambda^{2/3}\left(  1+\frac{1}{\left(  1-K_{0}%
^{-1}\right)  \lambda^{1/3}}\right)  .
\end{align*}
Then, for $m<L\leq K$, we have%
\begin{align*}
&  g^{\prime}\left(  L-1-m\right)  -g^{\prime}\left(  0\right) \\
&  \leq g^{\prime}\left(  K-1-m\right)  -g^{\prime}\left(  0\right) \\
&  \leq\frac{mK}{W}\mu.
\end{align*}
\end{lemma}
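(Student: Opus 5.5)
The plan is to compute $g'$ and $g''$ explicitly and reduce everything to an elementary inequality between rational functions of $K$, $r$, $m$. From the definition of $f$,
\[
g'(x)=\frac{t}{2\pi}\left(\frac{1}{Kr+m+x}-\frac{1}{Kr+x}\right)=-\frac{t}{2\pi}\,\frac{m}{(Kr+x)(Kr+m+x)},
\]
and
\[
g''(x)=\frac{t}{2\pi}\left(\frac{1}{(Kr+x)^{2}}-\frac{1}{(Kr+m+x)^{2}}\right)>0
\]
for $x\geq0$. So $g'$ is increasing on $[0,\infty)$. Since $m<L\leq K$ gives $0\leq L-1-m\leq K-1-m$, the first inequality follows at once.

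For the second inequality, put $a=Kr$ and $b=Kr+K-1-m$, so that $b+m=K(r+1)-1$. Then
\[
g'(K-1-m)-g'(0)=\frac{tm}{2\pi}\left(\frac{1}{a(a+m)}-\frac{1}{b(b+m)}\right).
\]
I will split the bracket as
\[
\frac{1}{a(a+m)}-\frac{1}{b(b+m)}=\frac{b-a}{ab(a+m)}+\frac{b-a}{b(a+m)(b+m)},
\]
with $b-a=K-1-m\leq K-1$.

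Next, expand the target $\frac{mK}{W}\mu$ using $\lambda^{2/3}=(r+1)^2/r^2$ and $\lambda^{1/3}=(r+1)/r$:
\[
\frac{mK}{W}\mu=\frac{tm}{2\pi}\left(\frac{1}{r^{2}(r+1)K^{2}}+\frac{1}{(1-K_{0}^{-1})\,r(r+1)^{2}K^{2}}\right).
\]
Since $K\geq K_0$, we have $1/(1-K_0^{-1})\geq K/(K-1)$. So it suffices to prove the inequality with the second summand replaced by $1/\bigl(r(r+1)^2K(K-1)\bigr)$, or by anything smaller. This is where the hypothesis $K\geq K_0$ enters.

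The remaining work is to match the two pieces of the split against the two target terms. The plan is to bound each denominator factor from below using $a=Kr$, $a+m\geq Kr$, $b\geq Kr$ and $b+m=K(r+1)-1$, and each numerator by $K-1$ or $K$, and then to compare term by term. Should a term-by-term match fail by a little, the fallback is to treat the whole expression as a function of $m\in[1,K-1]$, locate the worst case (I expect $m$ small, since $b-a$ is largest there), and then verify a single two-variable inequality in $K,r\geq2$ by clearing denominators. The extra room of $1/(K-1)$ versus $1/K$ in the second target term supplies some slack.

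The main obstacle is this final comparison. The split must be arranged so that the $(r+1)^3$ in $W$ is genuinely absorbed. A crude bound like $b\geq Kr$ throws away a factor of roughly $(r+1)/r$, and the inequality may be tight enough that this loss matters. If so, I would instead integrate $g''$ directly over $[0,K-1-m]$ and use monotonicity in the variable $Kr+x$ to produce exactly the $r^{-2}(r+1)^{-1}$ and $r^{-1}(r+1)^{-2}$ shapes.
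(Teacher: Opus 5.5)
\paragraph{Where the proposal stops short.} Your computation of $g'$ and $g''$ is correct, and so is the monotonicity argument for the first inequality. The exact formula for $g'(K-1-m)-g'(0)$ and the expansion of $\frac{mK}{W}\mu$ are also right. The gap is the final comparison, which is the whole content of the lemma. You never prove it, and the route you list first does not work.

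With $b\ge Kr$, $a+m\ge Kr$ and $b-a\le K-1$:
\begin{itemize}
\item your first piece is bounded by $\frac{K-1}{K^3r^3}$, which exceeds the target $\frac{1}{r^2(r+1)K^2}$ as soon as $K>r+1$;
\item your second-piece bound $\frac{K-1}{K^2r^2(K(r+1)-1)}$ exceeds $\frac{1}{r(r+1)^2K(K-1)}$ by a factor tending to $(r+1)/r$ as $K\to\infty$.
\end{itemize}
The slack $K/(K-1)=1+O(1/K)$ cannot absorb a fixed factor like $(r+1)/r$. This is not an accident. For fixed $m$ and $K\to\infty$, the left side is asymptotic to $\frac{tm}{2\pi K^2}\cdot\frac{2r+1}{r^2(r+1)^2}$, which is exactly your reduced right-hand side to leading order. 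So any constant-factor loss is fatal. Writing the difference as a sum of two positive pieces separates the numerator $b-a$ from $b$, even though the two grow together. Your fallbacks are only announced, not carried out. The reduction to $m=1$ is legitimate, since the bracket is decreasing in $m$, but the resulting two-variable inequality is left unchecked.

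\paragraph{The missing idea.} Do not split the difference. Bound each of its two terms in the favourable direction:
\[
a(a+m)=Kr(Kr+m)\ge K^2r^2,\qquad b(b+m)=(K(r+1)-1-m)(K(r+1)-1)\le K^2(r+1)^2 .
\]
These give
\[
g'(K-1-m)-g'(0)\le\frac{tm}{2\pi K^2}\left(\frac{1}{r^2}-\frac{1}{(r+1)^2}\right)=\frac{tm}{2\pi K^2}\left(\frac{1}{r^2(r+1)}+\frac{1}{r(r+1)^2}\right)=\frac{mK}{W}\cdot\frac{\lambda^{2/3}+\lambda^{1/3}}{2}.
\]
This is at most $\frac{mK}{W}\mu$ because $1/(1-K_0^{-1})>1$.

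A version of your second fallback also works. Use the mean value theorem in the form $g''(x)\le\frac{tm}{\pi(Kr+x)^3}$ and integrate over $[0,K-1-m]$; this yields the same bound directly. With either argument, the hypothesis $K\ge K_0$ is not needed at all, only $K_0>1$.
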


\begin{proof}
\noindent For the first inequality we use that $g^{\prime}$ is monotonically
increasing. The proof for the second inequality follows the steps in
(\cite{Hiary2}, formulas 5.1 and 5.2) and can be arranged by some routine
standard arguments.
\end{proof}

\begin{lemma}
[Improved second derivative test, Part 1]\label{SecondderivativePart1} Let
$m,r,L,K\geq2$ be positive integers and $t,K_{0}$ be positive numbers with $K
\geq K_{0}>1$. Further let $0<\Delta<1/2$ and $f,g,W,\lambda,\mu$ to be
defined as in Lemma \ref{Functionestimate}. Then we have

\noindent(a) For $m<K$ we have
\[
\frac{m}{W}\leq g^{\prime\prime}\left(  x\right)  \leq\frac{m}{W}\lambda
,\quad0\leq x\leq K-m.
\]
(b) For $m<L\leq K$ we have
\begin{align*}
&  \left\vert \sum_{n=0}^{L-1-m}e\left(  g\left(  n\right)  \right)
\right\vert \\
&  \leq\left(  \frac{mK}{W}\mu-1\right)  \left(  1+\frac{2W\Delta}{m}+\frac
{2}{\pi\Delta}\right)  +P\left(  \varepsilon_{1}\right)  +P\left(
1-\varepsilon_{2}\right) -\frac{2}{\pi\Delta},
\end{align*}
with%
\[
P\left(  \varepsilon\right)  =\left(  1+\frac{2W\Delta}{m}+\frac{2}{\pi\Delta
}\right)  \varepsilon+p\left(  \varepsilon\right)  ,
\]%
\[
p\left(  \varepsilon\right)  =\left\{
\begin{array}
[c]{ll}%
1+\frac{2}{\pi\Delta}+\frac{W}{m}\left(  \Delta-\varepsilon\right)  &
\text{for }\varepsilon\in\left[  0,\Delta\right)  ,\\
\frac{1}{\pi}\left(  \frac{1}{\Delta}+\frac{1}{\varepsilon}\right)  &
\text{for }\varepsilon\in\left[  \Delta,1-\Delta\right]  ,\\
0 & \text{for }\varepsilon\in\left(  1-\Delta,1\right]  ,
\end{array}
\right.
\]
and%
\begin{align*}
\varepsilon_{1}  &  =g^{\prime}\left(  0\right)  -\left\lfloor g^{\prime
}\left(  0\right)  \right\rfloor ,\\
\varepsilon_{2}  &  =g^{\prime}\left(  L-1-m\right)  -\left\lfloor g^{\prime
}\left(  L-1-m\right)  \right\rfloor .
\end{align*}
\end{lemma}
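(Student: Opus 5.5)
For part (a) I will compute directly. We have $g'(x)=\frac{t}{2\pi}\left(\frac{1}{Kr+m+x}-\frac{1}{Kr+x}\right)$. Differentiating once more gives
\[
g''(x)=\frac{t}{2\pi}\left(\frac{1}{(Kr+x)^2}-\frac{1}{(Kr+m+x)^2}\right)=\frac{t}{2\pi}\int_{x}^{x+m}\frac{2\,du}{(Kr+u)^3}.
\]
Thus $g''(x)=\frac{tm}{\pi}(Kr+\xi)^{-3}$ for some $\xi\in[x,x+m]$. For $0\le x\le K-m$ we have $0\le \xi\le K$, so $Kr\le Kr+\xi\le K(r+1)$. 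Hence
\[
\frac{tm}{\pi K^3(r+1)^3}\le g''(x)\le \frac{tm}{\pi K^3r^3}.
\]
With $W=\pi(r+1)^3K^3/t$ this reads $m/W\le g''\le \lambda m/W$, which is (a). In particular $g'$ is increasing, and this monotonicity is what the first inequality of Lemma \ref{Functionestimate} uses.

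For part (b) I will use the standard partition of the range of $g'$. Set $I=[0,L-1-m]$; by (a), $g'$ increases on $I$ from $g'(0)=\lfloor g'(0)\rfloor+\varepsilon_1$ to $g'(L-1-m)=\lfloor g'(L-1-m)\rfloor+\varepsilon_2$. Let $z_0=\lfloor g'(0)\rfloor$ and $z_1=\lfloor g'(L-1-m)\rfloor$. For each integer $z$ with $z_0\le z\le z_1$, I split the preimage under $g'$ of $[z,z+1]$ into three parts:
\begin{itemize}
\item a ``Kusmin--Landau piece'', where $g'\in[z+\Delta,z+1-\Delta]$;
\item two ``stationary pieces'', where $g'\in[z-\Delta,z+\Delta]$, i.e. neighbourhoods of the points where $g'$ is an integer.
\end{itemize}

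On a Kusmin--Landau piece, Lemma \ref{KusminLandau} with $U=\Delta$, $V=1-\Delta$ gives the bound $2/(\pi\Delta)$; Remark \ref{KusminRemark} lets me take open or closed endpoints freely. On a stationary piece of $g'$-length $2\Delta$, the lower bound $g''\ge m/W$ shows that the $x$-length is at most $2W\Delta/m$. So the trivial bound for that piece is at most $2W\Delta/m+1$.

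For each full unit step of $g'$ this costs $1+2W\Delta/m+2/(\pi\Delta)$. The total number of full steps is essentially $g'(L-1-m)-g'(0)-\varepsilon_2+\varepsilon_1$, and Lemma \ref{Functionestimate} bounds $g'(L-1-m)-g'(0)\le mK\mu/W$.

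The main work, and the main obstacle, is the bookkeeping at the two ends. Concretely I would write
\[
g'(L-1-m)-g'(0)=(z_1-z_0-1)+(1-\varepsilon_1)+\varepsilon_2.
\]
I then count $z_1-z_0-1\le mK\mu/W-1-(1-\varepsilon_1)-\varepsilon_2+1$ full blocks. The partial first block, which starts at fractional position $\varepsilon_1$, and the partial last block, which ends at $\varepsilon_2$, are estimated separately; these are the functions $p(\varepsilon_1)$ and $p(1-\varepsilon_2)$. There are three regimes for a partial block:
\begin{itemize}
\item If it lies inside a stationary zone (fraction $<\Delta$), I bound it trivially by length, which gives $1+\frac{W}{m}(\Delta-\varepsilon)$. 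The extra $2/(\pi\Delta)$ in that case of $p$ absorbs the adjacent Kusmin--Landau piece.
\item If it starts in the middle region, I apply Lemma \ref{KusminLandau} with asymmetric $U=\varepsilon$ or $1-V$, which gives $\frac1\pi(\frac1\Delta+\frac1\varepsilon)$.
\item If it lies in the final $\Delta$-zone, it contributes nothing beyond what is already counted.
\end{itemize}

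Rearranging, the full-block count multiplied by the per-block cost gives the term $(mK\mu/W-1)(1+2W\Delta/m+2/(\pi\Delta))$, plus $\varepsilon$ times the per-block cost, which is absorbed into $P(\varepsilon)$. The final $-2/(\pi\Delta)$ corrects a double counting of one Kusmin--Landau piece. I expect verifying that each case of $p$ matches this accounting exactly, including the boundary overlap when $z_0=z_1$, to be the delicate step. I would handle it by treating the cases $\varepsilon_1\in[0,\Delta)$, $[\Delta,1-\Delta]$ and $(1-\Delta,1]$ (and similarly for $1-\varepsilon_2$) one at a time. In each case I would also check that replacing the exact block count by the upper bound from Lemma \ref{Functionestimate} only increases the right-hand side, since the per-block cost is positive.
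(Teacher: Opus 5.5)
Your proposal is essentially the paper's proof. Part (a) is done by direct computation. Part (b) bounds trivially the $\Delta$-neighbourhoods of the integer values of $g'$ (each of $x$-length at most $2W\Delta/m$) and applies Lemma \ref{KusminLandau} with $U=\Delta$, $V=1-\Delta$ between them. The two ends are treated by the three regimes that produce $p(\varepsilon_1)$ and $p(1-\varepsilon_2)$, and the case $\lfloor g'(0)\rfloor=\lfloor g'(L-1-m)\rfloor$ needs a separate case analysis (the paper's six cases for $k=0$).

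When you write it out, fix the count, which is off by one. Count as the paper does: $k=z_1-z_0\le\frac{mK}{W}\mu+\varepsilon_1-\varepsilon_2$ stationary zones and $k-1$ Kusmin--Landau pieces, giving $k\bigl(1+\frac{2W\Delta}{m}+\frac{2}{\pi\Delta}\bigr)+p(\varepsilon_1)+p(1-\varepsilon_2)-\frac{2}{\pi\Delta}$. Your ``$z_1-z_0-1$ full blocks plus the two $p$-terms'' leaves one stationary zone unaccounted for. Its cost $1+\frac{2W\Delta}{m}$ is exactly what turns the resulting $-\bigl(1+\frac{2W\Delta}{m}+\frac{2}{\pi\Delta}\bigr)$ into the $-\frac{2}{\pi\Delta}$ of the statement.
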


\begin{lemma}
[Improved second derivative test, Part 2]\label{SecondderivativePart2} Let
$t\geq5\cdot10^{6}$, $\phi\in\left[  1/3, 0.35\right]$, $K=\left\lceil
t^{\phi}\right\rceil $, $n_{1}=\left\lfloor \sqrt{t/2\pi}\right\rfloor $,
$R=\left\lfloor n_{1}/K \right\rfloor $ and $4\leq r\leq R$.

\noindent Further let $m,L$ be positive integers, $K_{0}$ be a positive number
with $K \geq K_{0} >1$ and $f,g,W,\lambda,\mu$ to be defined as in Lemma
\ref{Functionestimate}. Finally, let $0<c<3$ and $M=\left\lceil cW^{1/3}%
\right\rceil $.

\noindent For $L\leq K$ and $1\leq m\leq\min\left\{  M,K-1\right\}  =M$ we
have%
\[
\left\vert \sum_{n=0}^{L-1-m}e\left(  g\left(  n\right)  \right)  \right\vert
\leq\frac{4\mu K}{\sqrt{\pi W}}m^{1/2}+\frac{\mu K}{W}m+4\sqrt{\frac{W}{\pi}%
}m^{-1/2}+1-\frac{6}{\pi}.
\]

\end{lemma}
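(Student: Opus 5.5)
The plan is to start from Part 1 (Lemma \ref{SecondderivativePart1}(b)), bound the $\varepsilon$-dependent terms uniformly, and then choose $\Delta$ optimally as a function of $m$ and $W$. Write $A=1+\frac{2W\Delta}{m}+\frac{2}{\pi\Delta}$. By Lemma \ref{SecondderivativePart1}(b), the sum is at most
\[
\Big(\frac{mK}{W}\mu-1\Big)A+P(\varepsilon_1)+P(1-\varepsilon_2)-\frac{2}{\pi\Delta}.
\]

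First I bound $P(\varepsilon_1)+P(1-\varepsilon_2)$ uniformly in $\varepsilon_1,\varepsilon_2\in[0,1)$. Since $P(\varepsilon)=A\varepsilon+p(\varepsilon)$, I claim $P(\varepsilon)\le A$ on $[0,1]$, so the two terms together are at most $2A$. I check this piece by piece.
\begin{itemize}
\item On $(1-\Delta,1]$ we have $P(\varepsilon)=A\varepsilon\le A$.
\item On $[0,\Delta)$ we have $P(\varepsilon)=A\varepsilon+1+\frac{2}{\pi\Delta}+\frac{W}{m}(\Delta-\varepsilon)$. This is linear in $\varepsilon$, so it suffices to check the endpoints. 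At $\varepsilon=0$ it equals $1+\frac{2}{\pi\Delta}+\frac{W\Delta}{m}\le A$. At $\varepsilon=\Delta$ it equals $A\Delta+1+\frac{2}{\pi\Delta}$. Comparing this with $A$ needs $\Delta$ small, and I expect the chosen $\Delta$ to be small.
\item On $[\Delta,1-\Delta]$ the function $P$ is convex, so its maximum is attained at an endpoint. There I compare $A\Delta+\frac{2}{\pi\Delta}$ and $A(1-\Delta)+\frac{1}{\pi}\big(\frac1\Delta+\frac1{1-\Delta}\big)$ with $A$, using $\frac{2}{\pi\Delta}\le A$.
\end{itemize}
If the crude bound $\le A$ fails by a lower-order amount, I would instead keep the exact endpoint values. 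The constant $1-\frac{6}{\pi}$ in the statement suggests the bookkeeping yields $\big(\frac{mK\mu}{W}+1\big)A-\frac{2}{\pi\Delta}$ minus something.

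Combining the pieces, the total is roughly $\frac{mK\mu}{W}A+A-\frac{2}{\pi\Delta}$. Expanding gives
\[
\frac{mK\mu}{W}+\frac{2K\mu\Delta}{1}+\frac{2mK\mu}{\pi W\Delta}+1+\frac{2W\Delta}{m}+\text{const}.
\]
Now I choose $\Delta=\sqrt{\frac{m}{\pi W}}$. This balances $2K\mu\Delta$ against $\frac{2mK\mu}{\pi W\Delta}$, giving $\frac{4\mu K}{\sqrt{\pi W}}m^{1/2}$. It makes $\frac{2W\Delta}{m}=2\sqrt{\frac{W}{\pi m}}$, and the leftover $\frac{2}{\pi\Delta}$-type terms contribute another $2\sqrt{\frac{W}{\pi}}m^{-1/2}$, totalling $4\sqrt{W/\pi}\,m^{-1/2}$. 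The remaining constants should collapse to $1-\frac{6}{\pi}$ after the exact endpoint bookkeeping above.

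Next I verify admissibility, namely $0<\Delta<1/2$ and the smallness used in the $[0,\Delta)$ case. We have $m\le M=\lceil cW^{1/3}\rceil$ with $c<3$, so $\Delta^2=m/(\pi W)\le (3W^{1/3}+1)/(\pi W)$. This is tiny because $W=\pi(r+1)^3K^3/t\ge \pi\cdot 125\,t^{3\phi-1}\ge 125\pi$. The range $t\ge5\cdot10^6$, $\phi\in[1/3,0.35]$, $r\ge4$ guarantees this, and it also gives $M\le K-1$ since $K\approx t^{\phi}$ while $W^{1/3}\approx (r+1)K t^{-1/3}\le n_1t^{-1/3}\ll K$.

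The main obstacle is the case analysis for $P(\varepsilon_1)+P(1-\varepsilon_2)$ needed to extract exactly the constant $1-6/\pi$. In particular, the middle regime of $p$ contributes $\frac1\pi(\frac1\Delta+\frac1\varepsilon)$ terms that must be absorbed into the $\frac{2}{\pi\Delta}$ pieces. I would first try to prove $\max_\varepsilon P(\varepsilon)\le A-\frac{2}{\pi}$ or a similar sharp inequality for small $\Delta$, and adjust if it fails.
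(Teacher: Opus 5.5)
Your skeleton matches the paper's. You apply Lemma~\ref{SecondderivativePart1}(b), bound $P$ uniformly on $[0,1]$, take $\Delta=\sqrt{m/(\pi W)}$ (so that $W\Delta/m=1/(\pi\Delta)$ and $A=1+\frac{4}{\pi\Delta}$), and check $\Delta<1/6$ and $M\le K-1$.

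\textbf{The gap: $P(\varepsilon)\le A$ is false.} It fails by a main-order amount, not a lower-order one. On $[\Delta,1-\Delta]$, using $A\Delta=\Delta+\frac{4}{\pi}$, the right endpoint gives
\[
P(1-\Delta)=A(1-\Delta)+\frac{1}{\pi\Delta}+\frac{1}{\pi(1-\Delta)}=A+\frac{1}{\pi\Delta}-\frac{4}{\pi}-\Delta+\frac{1}{\pi(1-\Delta)}.
\]
Since $\frac{1}{\pi\Delta}>\frac{6}{\pi}$, this exceeds $A$ by roughly $\frac{1}{\pi\Delta}-\frac{3}{\pi}>0$. That excess is of order $\sqrt{W/m}$, the same order as the term $4\sqrt{W/\pi}\,m^{-1/2}$ you are trying to produce. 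Your fallback $\max P\le A-\frac{2}{\pi}$ fails all the more.

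Your final accounting also does not hold together. The bound $P\le A$ would give $\bigl(\frac{mK\mu}{W}+1\bigr)A-\frac{2}{\pi\Delta}$, which means coefficient $2$ on $\sqrt{W/\pi}\,m^{-1/2}$ and constant $1$. The ``another $2\sqrt{W/\pi}\,m^{-1/2}$ from leftover terms'' is not derived from anything.

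\textbf{The missing step.} What is needed is the sharp uniform bound
\[
P(\varepsilon)\le 1+\frac{5}{\pi\Delta}-\frac{3}{\pi}=A+\frac{1}{\pi\Delta}-\frac{3}{\pi},\qquad \varepsilon\in[0,1],
\]
valid for $0<\Delta<1/6$. The paper proves it in three cases:
\begin{itemize}
\item On $[0,\Delta)$, write $P(\varepsilon)=1+\frac{3}{\pi\Delta}+\frac{\varepsilon}{\pi}\bigl(\pi+\frac{4}{\Delta}-\frac{1}{\Delta^{2}}\bigr)\le 1+\frac{3}{\pi\Delta}$.
\item On $[\Delta,1-\Delta]$, convexity reduces to the endpoints, the right one being larger. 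There it suffices that $\frac{1}{\pi(1-\Delta)}-\Delta\le\frac{1}{\pi}$ on $(0,1/6)$.
\item On $(1-\Delta,1]$, simply $P\le A$.
\end{itemize}
Then $P(\varepsilon_1)+P(1-\varepsilon_2)-\frac{2}{\pi\Delta}\le 2A-\frac{6}{\pi}$, so the sum is at most $\bigl(\frac{mK\mu}{W}+1\bigr)A-\frac{6}{\pi}$. This expands exactly to the stated right-hand side. The two excess $\frac{1}{\pi\Delta}$ terms cancel the $-\frac{2}{\pi\Delta}$, which is what turns your coefficient $2$ into $4$. The two $-\frac{3}{\pi}$ terms give the constant $1-\frac{6}{\pi}$.

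\textbf{A smaller omission.} You do not treat the case $m\ge L$, where Lemma~\ref{SecondderivativePart1}(b) does not apply. There the sum is empty, so you must check that the right-hand side is nonnegative. It is: it is at least $\frac{4}{\pi\Delta}+1-\frac{6}{\pi}>0$ because $\Delta<1/6$.
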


\begin{remark}
Using the Riemann-Siegel bound, Lemma \ref{RiemannSiegel}, we later apply
Lemma \ref{SecondderivativePart2} to the initial range $t\geq3.8\cdot10^{7}$.
Consequently, the assumptions about the minimal bound for t in Lemma
\ref{SecondderivativePart2} are by no means restrictive. On the other hand, a
more general version of this Lemma could be obtained.
\end{remark}

\begin{proof}
\noindent We transfer the proof for these important Lemmas to Section \ref{Secondderivateproof}.
\end{proof}

\begin{lemma}
\label{EasySums} For any positive integer $M$ we have the following
inequalities:
\[%
\begin{array}
[c]{ll}%
\text{(a)} & {\displaystyle\sum\limits_{m=1}^{M}} \left(  1-\dfrac{m}%
{M}\right)  m^{-1/2}\leq\dfrac{4}{3}M^{1/2},\\
\text{(b)} & {\displaystyle\sum\limits_{m=1}^{M}} \left(  1-\dfrac{m}%
{M}\right)  =\dfrac{1}{2}\left(  M-1\right)  ,\\
\text{(c)} & {\displaystyle\sum\limits_{m=1}^{M}} \left(  1-\dfrac{m}%
{M}\right)  m^{1/2}\leq\dfrac{4}{15}M^{3/2},\\
\text{(d)} & {\displaystyle\sum\limits_{m=1}^{M}} \left(  1-\dfrac{m}%
{M}\right)  m\leq\dfrac{1}{6}M^{2}.
\end{array}
\]
\end{lemma}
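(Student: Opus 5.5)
Each of the four inequalities concerns the weight $w(m)=1-m/M$ against a power $m^{\alpha}$ with $\alpha\in\{-1/2,0,1/2,1\}$. Writing $\sum_{m=1}^{M}(1-m/M)m^{\alpha}=\sum_{m=1}^{M}m^{\alpha}-\frac{1}{M}\sum_{m=1}^{M}m^{\alpha+1}$ reduces everything to power sums. Parts (b) and (d) are exact evaluations. For (b) we get $M-\frac{1}{M}\cdot\frac{M(M+1)}{2}=\frac{M-1}{2}$. For (d) we get $\frac{M(M+1)}{2}-\frac{(M+1)(2M+1)}{6}=\frac{(M+1)(M-1)}{6}=\frac{M^2-1}{6}\le\frac{M^2}{6}$. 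So only (a) and (c) need genuine estimates.

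For (a) and (c) I would compare the sum with the integral $\int_0^M(1-x/M)x^{\alpha}\,dx$. For $\alpha=-1/2$ this equals $2M^{1/2}-\frac{2}{3}M^{1/2}=\frac43M^{1/2}$, and for $\alpha=1/2$ it equals $\frac23M^{3/2}-\frac25M^{3/2}=\frac{4}{15}M^{3/2}$. These are precisely the constants in the statement, so the task is to show that the sum does not exceed the integral.

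For (a), the summand $h(x)=(1-x/M)x^{-1/2}$ is decreasing on $(0,M]$, since both factors are positive and decreasing. Hence $h(m)\le\int_{m-1}^{m}h(x)\,dx$ for each $m\ge1$, and summing over $m=1,\dots,M$ gives the bound directly; the singularity at $0$ is integrable, so there is no difficulty.

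Part (c) is the main obstacle, because $h(x)=(1-x/M)x^{1/2}$ is not monotone: it increases up to $x=M/3$ and then decreases. However, $h$ is concave on $[0,M]$. Its second derivative is $-\frac14x^{-3/2}-\frac{3}{4M}x^{-1/2}<0$ (from $x^{1/2}-x^{3/2}/M$). Concavity gives the midpoint inequality $h(m)\le\int_{m-1/2}^{m+1/2}h(x)\,dx$ whenever $[m-\tfrac12,m+\tfrac12]\subset[0,M]$, which holds for $1\le m\le M-1$. The term $m=M$ vanishes. Summing yields $\sum_{m=1}^{M}h(m)\le\int_{1/2}^{M-1/2}h\le\int_0^M h=\frac{4}{15}M^{3/2}$, since $h\ge0$. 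This completes (c).

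As a sanity check, the cases $M=1,2$ give $0$ on the left in (a) and (c) (for $M=2$ in (a) the left side is $1/2\le\frac43\sqrt2$), so no small-$M$ exceptions arise.
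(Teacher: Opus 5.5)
\textbf{Gap in part (c).} Your parts (a), (b) and (d) are correct. The exact values $\frac{M-1}{2}$ and $\frac{M^2-1}{6}$ are what the paper means by ``easily established''. The paper only cites Hiary--Patel--Yang and Cheng--Graham for (a) and (c), so your monotone comparison for (a) is a valid self-contained replacement. Part (c), however, has a genuine error: you have the midpoint inequality backwards. For a concave function the midpoint value dominates the average (Jensen, i.e.\ the left half of Hermite--Hadamard):
\[
h(m)\ \ge\ \int_{m-1/2}^{m+1/2}h(x)\,dx .
\]
Summing this gives a \emph{lower} bound for $\sum h(m)$, not an upper bound. Your intermediate claim $\sum_{m=1}^{M}h(m)\le\int_{1/2}^{M-1/2}h$ is actually false. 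For $M=2$ the left side is $h(1)=1/2$, while $\int_{1/2}^{3/2}\bigl(x^{1/2}-x^{3/2}/2\bigr)\,dx\approx 0.473$. Your sanity check is also off here: for $M=2$ the left side of (c) is $1/2$, not $0$, though it is still below $\frac{4}{15}2^{3/2}\approx 0.754$.

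\textbf{Fix.} Use the other half of Hermite--Hadamard, which does go in your favour for concave $h$. Since $h$ lies above its chord on $[m-1,m]$, integrating gives $\frac12\bigl(h(m-1)+h(m)\bigr)\le\int_{m-1}^{m}h(x)\,dx$. Here $h$ is continuous on $[0,M]$ and concave on $(0,M]$, hence concave on all of $[0,M]$, so this holds for every $m=1,\dots,M$. Summing gives
\[
\tfrac12 h(0)+\sum_{m=1}^{M-1}h(m)+\tfrac12 h(M)\ \le\ \int_0^M h(x)\,dx=\tfrac{4}{15}M^{3/2}.
\]
Because $h(0)=h(M)=0$ and the $m=M$ term of the original sum vanishes, this is exactly (c), with no small-$M$ checks needed. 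With this replacement your argument is complete.
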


\begin{proof}
\noindent Equation (b) and inequality (d) are easily established. The other inequalities can be found in (\cite{Hiary2}, p. 215). Compare also (\cite{ChengGraham}, Lemma 7).
\end{proof}

\begin{lemma}
\label{alphabetaEstimate} Let $t\geq5\cdot10^{6}$, $\phi\in\left[  1/3,
0.35\right]  $, $K=\left\lceil t^{\phi}\right\rceil $, $n_{1}=\left\lfloor
\sqrt{t/2\pi}\right\rfloor $, $R=\left\lfloor n_{1}/K \right\rfloor $ and
$4\leq r\leq R$.

\noindent Further let $m,L$ be positive integers, $K_{0}$ be a positive number
with $K \geq K_{0} >1$ and $f,g,W,\lambda,\mu$ to be defined as in Lemma
\ref{Functionestimate}. Finally, let $0<c<3$ and $M=\left\lceil cW^{1/3}%
\right\rceil $.

\noindent For $L\leq K$ we have
\[
\left\vert \sum_{n=0}^{L-1}e\left(  f\left(  n\right)  \right)  \right\vert
^{2}\leq\left(  \frac{K}{W^{1/3}}+c\right)  \left(  \alpha K+\beta
W^{2/3}\right)
\]
with%
\begin{align*}
\alpha &  =\frac{1}{c}+\frac{32\mu}{15\sqrt{\pi}}\sqrt{c+W^{-1/3}}+\frac{\mu
c}{3W^{1/3}}+\frac{\mu}{3W^{2/3}},\\
\beta &  =\frac{32}{3\sqrt{\pi c}}+\frac{1}{c}\left(  \frac{6}{\pi}-1\right)
\frac{1}{W^{2/3}}+\left(  1-\frac{6}{\pi}\right)  \frac{1}{W^{1/3}}.
\end{align*}
Additionally, we assert that $\alpha$ and $\beta$ always attain positive values.
\end{lemma}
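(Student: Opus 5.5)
We apply the Weyl differencing inequality, Lemma \ref{Weyl}, to the sum $\sum_{n=0}^{L-1}e(f(n))$, with $N=-1$ and the given $L\leq K$, and take $M=\lceil cW^{1/3}\rceil$. For $1\leq m\leq M$ the differenced sum $s_m'(L)$ is exactly $\sum_{n=0}^{L-1-m}e(g(n))$ with $g(x)=f(m+x)-f(x)$. If $m\geq L$ the sum is empty and contributes $0$. Otherwise we bound it by Lemma \ref{SecondderivativePart2}, which requires $M\leq K-1$; this follows from $W\asymp t^{3\phi}r^3/t$ together with the constraints on $\phi$, $r$ and $t$, and Lemma \ref{SecondderivativePart2} itself presupposes it. The upper bound in Lemma \ref{SecondderivativePart2} might become negative for small $m$. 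To avoid this, we note that for $m<L$ the bound is at least its value at the minimizing $m$, which is positive since $4\sqrt{W/\pi}m^{-1/2}\geq 4\sqrt{W/(\pi M)}$ is large. Hence we may use it as an upper bound for $|s_m'|$ for every $m$.

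Inserting these bounds gives
\[
\left|\sum_{n=0}^{L-1}e(f(n))\right|^2\leq\frac{L+M-1}{M}\Bigl(L+2\sum_{m=1}^{M}\Bigl(1-\frac mM\Bigr)\Bigl(\frac{4\mu K}{\sqrt{\pi W}}m^{1/2}+\frac{\mu K}{W}m+4\sqrt{\frac W\pi}m^{-1/2}+1-\frac6\pi\Bigr)\Bigr).
\]
We evaluate the weighted sums with Lemma \ref{EasySums}. Part (c) gives $\frac{8}{15}\cdot\frac{4\mu K}{\sqrt{\pi W}}M^{3/2}$, part (d) gives $\frac{\mu K}{3W}M^2$, and part (a) gives $\frac{32}{3}\sqrt{W/\pi}\,M^{1/2}$. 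Part (b) holds with equality, giving $(M-1)(1-6/\pi)$; this term is negative, so we must not over-estimate $M$ in it. Then we bound $M$ using $cW^{1/3}\leq M<cW^{1/3}+1$. In the positive terms we use $M\leq cW^{1/3}+1$: this gives $M^{3/2}\leq W^{1/2}(c+W^{-1/3})^{3/2}$ and $M^{2}\leq (cW^{1/3}+1)^2$. For the $M^{1/2}$ term we use $M^{-1/2}\leq (cW^{1/3})^{-1/2}$.

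The prefactor is handled with $L\leq K$ and $\frac{L+M-1}{M}\leq\frac{K}{M}+1\leq \frac{K}{cW^{1/3}}+1=\frac1c\bigl(\frac{K}{W^{1/3}}+c\bigr)$. It is cleaner to divide the bracket by $M$ and write the bound as $(K+M-1)\cdot\frac1M(\cdots)$. One then checks that $(K+M-1)\leq\bigl(\frac{K}{W^{1/3}}+c\bigr)W^{1/3}$ and that $\frac{1}{M}\bigl(K+2\sum\cdots\bigr)\leq W^{-1/3}(\alpha K+\beta W^{2/3})$. In detail:
\begin{itemize}
\item $K/M\leq K/(cW^{1/3})$ gives the term $\frac1c$ in $\alpha$.
\item The $m^{1/2}$-sum divided by $M$ gives $\frac{32\mu K}{15\sqrt\pi}W^{-1/2}M^{1/2}\leq\frac{32\mu}{15\sqrt\pi}K W^{-1/3}\sqrt{c+W^{-1/3}}$.
\item The $m$-sum divided by $M$ gives $\frac{2\mu K M}{6W}\leq\frac{\mu K}{3}(cW^{-2/3}+W^{-1})$, which yields the last two terms of $\alpha$.
\item The $m^{-1/2}$-sum divided by $M$ gives $\frac{32}{3}\sqrt{W/\pi}M^{-1/2}\leq\frac{32}{3\sqrt{\pi c}}W^{1/3}$.
\item The constant term gives $(1-\frac6\pi)(1-\frac1M)$. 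Here $1-\frac6\pi<0$ and $-\frac1M\cdot(1-\frac6\pi)=\frac{6/\pi-1}{M}\leq\frac{6/\pi-1}{cW^{1/3}}$, producing the two remaining terms of $\beta$ after multiplying by $W^{1/3}$.
\end{itemize}
Positivity of $\alpha$ is immediate since every term is positive. Positivity of $\beta$ follows from $\frac{32}{3\sqrt{\pi c}}>\frac{32}{3\sqrt{3\pi}}>3>|1-6/\pi|/W^{1/3}$, because $W\geq1$ in our range.

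The main obstacle is the bookkeeping of the prefactor $(L+M-1)/M$ against the signs of the terms. We need the factorization to come out exactly in the stated form, including the precise $W$-powers and the negative constant $1-6/\pi$. That constant forces us to use the lower bound for $M$ in one place and the upper bound in another, so each substitution has to be checked for monotonicity in the right direction.
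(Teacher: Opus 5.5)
Your proposal is correct and follows the paper's proof essentially line by line. The shared steps are: Weyl differencing with $N=-1$, Lemma \ref{SecondderivativePart2} for every $1\le m\le M$, and Lemma \ref{EasySums}. Then the bracket is divided by $M$, and $cW^{1/3}\le M<cW^{1/3}+1$ is used in the correct direction in each term: the upper bound for $M^{1/2}$, $M$ and $K+M-1$, the lower bound for $K/M$, $M^{-1/2}$ and $(6/\pi-1)/M$.

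The differences are minor. Your separate positivity check for $m\ge L$ is already contained in Lemma \ref{SecondderivativePart2} (Case 2 of part (e) of its proof). Your positivity argument for $\beta$ via $W\ge 1$ and $32/(3\sqrt{3\pi})>3$ is a slightly coarser but equally valid version of the paper's bound using $W^{1/3}\ge 5\pi^{1/3}$.
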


\begin{proof}
From the definition of $S_{m}^{\prime}\left(  L\right)  $ in
Lemma \ref{Weyl} together with $N=-1$, Lemma \ref{SecondderivativePart2} and
Lemma \ref{EasySums} we establish%
\begin{align*}
&  \frac{2}{M}\sum_{m=1}^{M}\left(  1-\frac{m}{M}\right)  \left\vert
S_{m}^{\prime}\left(  L\right)  \right\vert \\
&  =\frac{2}{M}\sum_{m=1}^{M}\left(  1-\frac{m}{M}\right)  \left\vert
\sum_{n=0}^{L-1-m}e\left(  f\left(  m+n\right)  -f\left(  n\right)  \right)
\right\vert \\
&  =\frac{2}{M}\sum_{m=1}^{M}\left(  1-\frac{m}{M}\right)  \left\vert
\sum_{n=0}^{L-1-m}e\left(  g\left(  n\right)  \right)  \right\vert \\
&  \leq\frac{32\mu K}{15\sqrt{\pi W}}M^{\frac{1}{2}}+\frac{\mu K}{3W}%
M+\frac{32}{3}\sqrt{\frac{W}{\pi}}M^{-\frac{1}{2}}+\left(  1-\frac{1}{M}\right)  \left(
1-\frac{6}{\pi}\right)  .
\end{align*}
Now, applying Lemma \ref{Weyl} together with $M=\left\lceil cW^{1/3}\right\rceil$ we obtain
\begin{align*}
&  \left\vert \sum_{n=0}^{L-1}e\left(  f\left(  n\right)  \right)  \right\vert
^{2}\\
&  \leq\left(  K+M-1\right)  \left(  \frac{K}{M}+\frac{2}{M}\sum_{m=1}%
^{M}\left(  1-\frac{m}{M}\right)  \left\vert S_{m}^{\prime}\left(  L\right)
\right\vert \right)  \\
&  \leq\left(  K+M-1\right)  \left(  \frac{K}{M}+\frac{32\mu K}{15\sqrt{\pi
W}}M^{\frac{1}{2}}+\frac{\mu K}{3W}M\right.  \\
&  \qquad+\left.  \frac{32}{3}\sqrt{\frac{W}{\pi}}M^{-\frac{1}{2}}+\frac{1}{M}\left(
\frac{6}{\pi}-1\right)  +1-\frac{6}{\pi}\right)  \\
&  \leq\left(  K+cW^{\frac{1}{3}}\right)  \left(  \frac{K}{W^{\frac{1}{3}}%
}\left(  \frac{1}{c}+\frac{32\mu}{15\sqrt{\pi}}\sqrt{c+W^{-\frac{1}{3}}}%
+\frac{\mu c}{3W^{\frac{1}{3}}}+\frac{\mu}{3W^{\frac{2}{3}}}\right)  \right.
\\
&  \qquad+\left.  W^{\frac{1}{3}}\left(  \frac{32}{3\sqrt{\pi c}}+\frac
{1}{cW^{\frac{2}{3}}}\left(  \frac{6}{\pi}-1\right)  +\left(  1-\frac{6}{\pi
}\right)  \frac{1}{W^{\frac{1}{3}}}\right)  \right)  \\
&  =\left(  K+cW^{\frac{1}{3}}\right)  \left(  \frac{K}{W^{\frac{1}{3}}}%
\alpha+W^{\frac{1}{3}}\beta\right)  \\
&  =\left(  \frac{K}{W^{\frac{1}{3}}}+c\right)  \left(  \alpha K+\beta
W^{\frac{2}{3}}\right)  .
\end{align*}
Finally, we check the positivity for $\alpha$ and $\beta$. For $\alpha$ this is trivial. For $\beta$ we first estimate
\[
W^{\frac{1}{3}}=\frac{\pi^{\frac{1}{3}}\left(  r+1\right)  K}{t^{\frac{1}{3}}%
}\geq5\pi^{\frac{1}{3}}t^{\phi-\frac{1}{3}}\geq5\pi^{\frac{1}{3}}.
\]
Now, we look to
\begin{align*}
\beta W^{\frac{1}{3}}  & =\left(  \frac{32}{3\sqrt{\pi c}}+\frac{1}{c}\left(
\frac{6}{\pi}-1\right)  \frac{1}{W^{\frac{2}{3}}}+\left( 1-\frac{6}{\pi
}\right)  \frac{1}{W^{\frac{1}{3}}}\right)  W^{\frac{1}{3}}\\
& =\frac{32}{3\sqrt{\pi c}}W^{\frac{1}{3}}+\frac{1}{c}\left(  \frac{6}{\pi
}-1\right)  \frac{1}{W^{\frac{1}{3}}}+1-\frac{6}{\pi}\\
& \geq\frac{32}{3\sqrt{\pi c}}5\pi^{\frac{1}{3}}+1-\frac{6}{\pi} \geq \frac
{160}{\pi^{\frac{1}{6}}3\sqrt{3}}+1-\frac{6}{\pi}=24.533\ldots > 0.
\end{align*}
By the positivity for $W^{1/3}$ we obtain our desired inequality.
\end{proof}

\section{Proof for Theorem \ref{theorem1}}

\label{MainProof}

Proof for assertion (a): We split the proof into different regions for the parameter $t$. As we see
later, the van der Corput bounds (\ref{CorputSum1}) and (\ref{CorputSum2})
attain their full power starting at a very high range for $t$. Thus we have to
handle carefully the intermediate region between the Riemann-Siegel bound and
the van der Corput bounds. This bottleneck region requires a more detailed analysis.

\subsection{Proof for the region $3 \leq t \leq200$}

Here, we directly apply Lemma \ref{ComputationalBound} which gives the required bound
\[
\left\vert \zeta\left(  1/2+it\right)  \right\vert \leq0.611 t^{1/6}\log t.
\]
for the range $3\leq t\leq200$.

\subsection{Proof for the region $200\leq t\leq3.8\cdot10^{7}$}

Here, we apply the Riemann-Siegel bounds from Lemma \ref{RiemannSiegel}(b) for the range $200\leq t\leq3.3\cdot10^{7}$ and 
Lemma \ref{RiemannSiegel}(c) for the range $3.3\cdot10^{7} \leq t \leq 3.8\cdot 10^{7}$. We summarize
\[
\left\vert \zeta\left(  1/2+it\right)  \right\vert \leq0.611 t^{1/6}\log t.
\]
for the range $200\leq t \leq3.8\cdot 10^{7}$.

\subsection{Estimating the sums for $\left\vert \zeta\left(  1/2+it\right)
\right\vert $}

\label{Bottleneckregion} We start with parameters $3.8\cdot10^{7}\leq
t_{0}\leq t\leq t_{1}$, $\phi\in\left[  1/3, 0.35\right]  $, $K=\left\lceil
t^{\phi}\right\rceil $, $K_{0}=t_{0}^{\phi}$, $n_{1}=\left\lfloor \sqrt
{t/2\pi}\right\rfloor $, $R=\left\lfloor n_{1}/K\right\rfloor $ and
$4=r_{0}\leq r\leq R$. We also define%
\[
R_{0}=\left\lceil \frac{\sqrt{t_{0}/2\pi}-1}{t_{0}^{\phi}+1}-1\right\rceil
,R_{1}=\left\lfloor \frac{1}{\sqrt{2\pi}}t_{1}^{1/2-\phi}\right\rfloor
\]
and%
\[
\mu_{0}=\frac{1}{2}\left(  1+\frac{1}{r_{0}}\right)  \left(  2+\frac{1}{r_{0}%
}+\frac{1}{t_{0}^{\phi}-1}\right)  .
\]
Then, using the definition for $\mu$ from Lemma \ref{Functionestimate}, we
have%
\begin{align}
R  &  \leq\frac{1}{\sqrt{2\pi}}t^{1/2-\phi},\label{Intermediate1}\\
4  &  \leq R_{0}\leq R\leq R_{1},\label{Intermediate2}\\
\mu &  \leq\mu_{0}. \label{Intermediate3}%
\end{align}

\noindent Assertion (\ref{Intermediate1}) is easy to establish. For assertion
(\ref{Intermediate2}), the first inequalities $4\leq R_{0}\leq R$ follow from
the proof steps in Lemma \ref{SecondderivativePart2}(a). The remaining
inequality $R\leq R_{1}$ can be deduced with use of (\ref{Intermediate1}).
Assertion (\ref{Intermediate3}) is obtained by
\begin{align*}
\mu &  =\frac{1}{2}\lambda^{\frac{2}{3}}\left(  1+\frac{1}{\left(
1-K_{0}^{-1}\right)  \lambda^{\frac{1}{3}}}\right)  =\frac{1}{2}\lambda
^{\frac{1}{3}}\left(  \lambda^{\frac{1}{3}}+\frac{K_{0}}{K_{0}-1}\right) \\
&  =\frac{1}{2}\left(  1+\frac{1}{r}\right)  \left(  2+\frac{1}{r}+\frac
{1}{K_{0}-1}\right) \\
&  \leq\frac{1}{2}\left(  1+\frac{1}{r_{0}}\right)  \left(  2+\frac{1}{r_{0}%
}+\frac{1}{t_{0}^{\phi}-1}\right) \\
&  =\mu_{0}.
\end{align*}
In the next step we derive some important inequalities for the four relevant
terms in the main inequality in Lemma \ref{alphabetaEstimate}.

\medskip\noindent First, we have
\begin{equation}
\alpha\leq M_{1}:=\frac{1}{c}+\mu_{0}\left(  \frac{32}{15\pi^{\frac{1}{2}}%
}\sqrt{c+\frac{1}{5\pi^{\frac{1}{3}}t_{0}^{\phi-\frac{1}{3}}}}+\frac{c}%
{15\pi^{\frac{1}{3}}t_{0}^{\phi-\frac{1}{3}}}+\frac{1}{75\pi^{\frac{2}{3}%
}t_{0}^{2\left(  \phi-\frac{1}{3}\right)  }}\right)  . \label{Intermediate4}%
\end{equation}
Using%
\begin{equation}
W^{\frac{1}{3}}=\frac{\pi^{\frac{1}{3}}\left(  r+1\right)  K}{t^{\frac{1}{3}}%
}\geq\frac{\pi^{\frac{1}{3}}\left(  r_{0}+1\right)  \left\lceil t^{\phi
}\right\rceil }{t^{\frac{1}{3}}}\geq5\pi^{\frac{1}{3}}t^{\phi-\frac{1}{3}},
\label{Intermediate5}%
\end{equation}
we estimate%
\begin{align*}
\alpha &  =\frac{1}{c}+\mu\left(  \frac{32}{15\sqrt{\pi}}\sqrt{c+W^{-\frac
{1}{3}}}+\frac{c}{3W^{\frac{1}{3}}}+\frac{1}{3W^{\frac{2}{3}}}\right) \\
&  \leq\frac{1}{c}+\mu\left(  \frac{32}{15\sqrt{\pi}}\sqrt{c+\frac{1}%
{5\pi^{\frac{1}{3}}t^{\phi-\frac{1}{3}}}}+\frac{c}{15\pi^{\frac{1}{3}}%
t^{\phi-\frac{1}{3}}}+\frac{1}{75\pi^{\frac{2}{3}}t^{2\left(  \phi-\frac{1}{3}
\right)  }}\right)  .
\end{align*}
The assertion now follows from the conditions $t\geq t_{0}$ and
(\ref{Intermediate3}).

\medskip\noindent Second, we have
\begin{equation}
\label{Intermediate6}%
\begin{split}
\alpha c\frac{W^{\frac{1}{3}}}{K}  &  \leq M_{2}:=\frac{1}{t_{0}^{\phi
-\frac{1}{6}}}\frac{1}{\sqrt{2}\pi^{\frac{1}{6}}}\left(  1+\frac{1}{R_{0}%
}\right) \\
&  +\mu_{0}c\left(  \frac{16\sqrt{2}}{15\pi^{\frac{2}{3}}}\frac{1}{t_{0}%
^{\phi-\frac{1}{6}}}\left(  1+\frac{1}{R_{0}}\right)  \sqrt{c+\frac{1}%
{5\pi^{\frac{1}{3}}t_{0}^{\phi-\frac{1}{3}}}}+\frac{c}{3t_{0}^{\phi}}+\frac
{1}{15\pi^{\frac{1}{3}}t_{0}^{2\phi-\frac{1}{3}}}\right)  .
\end{split}
\end{equation}
Applying (\ref{Intermediate1}) give us the following result:
\begin{equation}
\frac{W^{\frac{1}{3}}}{K}=\frac{\pi^{\frac{1}{3}}\left(  r+1\right)
K}{t^{\frac{1}{3}}}\frac{1}{K}\leq\frac{\pi^{\frac{1}{3}}}{t^{\frac{1}{3}}%
}R\left(  1+\frac{1}{R}\right)  \leq\frac{1}{t^{\phi-\frac{1}{6}}}\frac
{1}{\sqrt{2}\pi^{\frac{1}{6}}}\left(  1+\frac{1}{R}\right)  .
\label{Intermediate7}%
\end{equation}
Now, using (\ref{Intermediate5}) and (\ref{Intermediate7}) we estimate%
\begin{align*}
\alpha c\frac{W^{\frac{1}{3}}}{K}  &  =\frac{W^{\frac{1}{3}}}{K}+\mu c\left(
\frac{32}{15\sqrt{\pi}}\sqrt{c+W^{-\frac{1}{3}}}\frac{W^{\frac{1}{3}}}%
{K}+\frac{c}{3K}+\frac{1}{3W^{\frac{1}{3}}K}\right) \\
&  \leq\frac{1}{t^{\phi-\frac{1}{6}}}\frac{1}{\sqrt{2}\pi^{\frac{1}{6}}%
}\left(  1+\frac{1}{R}\right) \\
&  +\mu c\left(  \frac{32}{15\sqrt{\pi}}\sqrt{c+\frac{1}{5\pi^{\frac{1}{3}%
}t^{\phi-\frac{1}{3}}}}\frac{1}{t^{\phi-\frac{1}{6}}}\frac{1}{\sqrt{2}%
\pi^{\frac{1}{6}}}\left(  1+\frac{1}{R}\right)  +\frac{c}{3t^{\phi}}+\frac
{1}{15\pi^{\frac{1}{3}}t^{2\phi-\frac{1}{3}}}\right)  .
\end{align*}
Together with $t\geq t_{0}$, (\ref{Intermediate2}) and (\ref{Intermediate3})
we obtain the assertion.

\medskip\noindent Third, we have
\begin{equation}
\label{Intermediate8}%
\begin{split}
\beta\frac{W^{\frac{2}{3}}}{K}  &  \leq M_{3}:=\frac{1}{t_{0}^{\phi}}\frac
{1}{c}\left(  \frac{6}{\pi}-1\right)  +\frac{16}{3\pi^{\frac{5}{6}}\sqrt{c}
}\frac{1}{t_{0}^{\phi-\frac{1}{3}}}\left(  1+\frac{1}{R_{0}}\right)
^{2}\left(  1+\frac{1}{t_{0}^{\phi}}\right) \\
&  +\left(  1-\frac{6}{\pi}\right)  \frac{5\pi^{\frac{1}{3}}}{t_{1}^{\frac
{1}{3}}}.
\end{split}
\end{equation}
Using (\ref{Intermediate1}) we estimate
\begin{equation}
\label{Intermediate9}%
\begin{split}
\frac{W^{\frac{2}{3}}}{K}  &  =\frac{\pi^{\frac{2}{3}}\left(  r+1\right)
^{2}K^{2}}{t^{\frac{2}{3}}}\frac{1}{K}\leq\frac{\pi^{\frac{2}{3}}\left(
R+1\right)  ^{2}K}{t^{\frac{2}{3}}}\\
&  \leq\frac{\pi^{\frac{2}{3}}\frac{1}{2\pi}t^{1-2\phi}}{t^{\frac{2}{3}}
}\left(  1+\frac{1}{R}\right)  ^{2}t^{\phi}\left(  1+\frac{1}{t^{\phi}}\right)
\\
&  =\frac{1}{t^{\phi-\frac{1}{3}}}\frac{1}{2\pi^{\frac{1}{3}}}\left(
1+\frac{1}{R}\right)  ^{2}\left(  1+\frac{1}{t^{\phi}}\right)  .
\end{split}
\end{equation}
Further, we have%
\begin{equation}
\frac{W^{\frac{1}{3}}}{K}=\frac{\pi^{\frac{1}{3}}\left(  r+1\right)
K}{t^{\frac{1}{3}}}\frac{1}{K}\geq\frac{\pi^{\frac{1}{3}}\left(
r_{0}+1\right)  }{t^{\frac{1}{3}}}=\frac{5\pi^{\frac{1}{3}}}{t^{\frac{1}{3}}}.
\label{Intermediate10}%
\end{equation}
From the definition of $\beta$ in Lemma \ref{alphabetaEstimate} and using
(\ref{Intermediate9}), (\ref{Intermediate10}) we obtain
\begin{equation}
\label{Intermediate11}%
\begin{split}
\beta\frac{W^{\frac{2}{3}}}{K}  &  =\left(  \frac{32}{3\sqrt{\pi c}}+\frac
{1}{c}\left(  \frac{6}{\pi}-1\right)  \frac{1}{W^{\frac{2}{3}}}+\left(
1-\frac{6}{\pi}\right)  \frac{1}{W^{\frac{1}{3}}}\right)  \frac{W^{\frac{2}
{3}}}{K}\\
&  =\frac{32}{3\sqrt{\pi c}}\frac{W^{\frac{2}{3}}}{K}+\frac{1}{c}\left(
\frac{6}{\pi}-1\right)  \frac{1}{K}+\left(  1-\frac{6}{\pi}\right)
\frac{W^{\frac{1}{3}}}{K}\\
&  \leq\frac{32}{3\sqrt{\pi c}}\frac{1}{t^{\phi-\frac{1}{3}}}\frac{1}%
{2\pi^{\frac{1}{3}}}\left(  1+\frac{1}{R}\right)  ^{2}\left(  1+\frac
{1}{t^{\phi}}\right)  +\frac{1}{c}\left(  \frac{6}{\pi}-1\right)  \frac
{1}{t^{\phi}}+\left(  1-\frac{6}{\pi}\right)  \frac{5\pi^{\frac{1}{3}}%
}{t^{\frac{1}{3}}}\\
&  =\frac{1}{t^{\phi}}\frac{1}{c}\left(  \frac{6}{\pi}-1\right)  +\frac
{16}{3\sqrt{c}\pi^{\frac{5}{6}}}\frac{1}{t^{\phi-\frac{1}{3}}}\left(  1+\frac
{1}{R}\right)  ^{2}\left(  1+\frac{1}{t^{\phi}}\right)  +\left(  1-\frac
{6}{\pi}\right)  \frac{5\pi^{\frac{1}{3}}}{t^{\frac{1}{3}}}.
\end{split}
\end{equation}
Together with $t_{0}\leq t\leq t_{1}$ and (\ref{Intermediate2}) we obtain our
desired inequality.

\medskip\noindent Our last assertion reads
\begin{equation}
\label{Intermediate12}%
\begin{split}
c\beta\frac{W}{K^{2}}  &  \leq M_{4}:=\frac{1}{t_{0}^{2\phi-\frac{1}{6}}}
\frac{1}{\sqrt{2}\pi^{\frac{1}{6}}}\left(  \frac{6}{\pi}-1\right)  \left(
1+\frac{1}{R_{0}}\right) \\
&  +\frac{1}{t_{0}^{2\phi-\frac{1}{2}}}\frac{8\sqrt{2c}}{3\pi}\left(
1+\frac{1}{R_{0}}\right)  ^{3}\left(  1+\frac{1}{t_{0}^{\phi}}\right) \\
&  +\left(  1-\frac{6}{\pi}\right)  \frac{1}{t_{1}^{\phi+\frac{1}{6}}}
\frac{5\pi^{\frac{1}{6}}c}{\sqrt{2}}\left(  1+\frac{1}{R_{1}}\right)  .
\end{split}
\end{equation}
Using (\ref{Intermediate7}), (\ref{Intermediate11}) and expanding%
\[
c\beta\frac{W}{K^{2}}=\left(  \beta\frac{W^{\frac{2}{3}}}{K}\right)
\left(c \frac{W^{\frac{1}{3}}}{K} \right)
\]
we obtain the assertion together with $t_{0}\leq t\leq t_{1}$ and
(\ref{Intermediate2}).

\medskip\noindent Now, we start estimating our main sum. Following the proof
in Lemma \ref{RiemannSiegel}, we first obtain%
\begin{equation}
\label{MainSumEstimate}%
\begin{split}
\left\vert \zeta\left(  1/2+it\right)  \right\vert  &  \leq2\left\vert
\sum\limits_{n=1}^{n_{1}}n^{-\frac{1}{2}+it}\right\vert +\cos\frac{\pi}
{8}\left(  2\pi\right)  ^{\frac{1}{4}}t^{-\frac{1}{4}}+0.127t^{-\frac{3}{4}}\\
&  \leq2\left\vert \sum\limits_{n=1}^{n_{1}}n^{-\frac{1}{2}+it}\right\vert
+1.463t^{-\frac{1}{4}}+0.127t^{-\frac{3}{4}}.
\end{split}
\end{equation}
We will now brake the main sum into three parts by
\begin{equation}
\label{MainSumBrake}%
\begin{split}
2\left\vert \sum\limits_{n=1}^{n_{1}}n^{-\frac{1}{2}+it}\right\vert  &
\leq2\left\vert \sum\limits_{n=1}^{Kr_{0}-1}n^{-\frac{1}{2}+it}\right\vert
+2\left\vert \sum\limits_{n=Kr_{0}}^{n_{1}}n^{-\frac{1}{2}+it}\right\vert \\
&  \leq2\sum\limits_{n=1}^{Kr_{0}-1}n^{-\frac{1}{2}}+2\sum_{r=r_{0}}
^{R-1}\left\vert \sum\limits_{n=Kr}^{K\left(  r+1\right)  -1}n^{-\frac{1}
{2}+it}\right\vert +2\left\vert \sum\limits_{n=KR}^{n_{1}}n^{-\frac{1}{2}%
+it}\right\vert \\
&  =T_{1}+T_{2}+T_{3}.
\end{split}
\end{equation}
We start with $T_{2}$. By applying partial summation together with Abel's
inequality and Lemma \ref{alphabetaEstimate} we arrive at%
\begin{align*}
\left\vert \sum\limits_{Kr\leq n<K\left(  r+1\right)  }^{{}}n^{-\frac{1}%
{2}+it}\right\vert  &  =\left\vert \sum\limits_{n=0}^{K-1}\frac{e^{it\log
\left(  Kr+n\right)  }}{\sqrt{Kr+n}}\right\vert \leq\frac{1}{\sqrt{Kr}}%
\max_{1\leq L\leq K}\left\vert \sum\limits_{n=0}^{L-1}e^{it\log\left(
Kr+n\right)  }\right\vert \\
&  =\frac{1}{\sqrt{Kr}}\max_{1\leq L\leq K}\left\vert \sum\limits_{n=0}%
^{L-1}e\left(  f\left(  n\right)  \right)  \right\vert \\
&  \leq\frac{1}{\sqrt{Kr}}\sqrt{\left(  \frac{K}{W^{\frac{1}{3}}}+c\right)
\left(  \alpha K+\beta W^{\frac{2}{3}}\right)  }.
\end{align*}
We summarize%
\begin{equation}
T_{2}\leq2\sum_{r=r_{0}}^{R-1}\frac{1}{\sqrt{Kr}}\sqrt{\left(  \frac
{K}{W^{\frac{1}{3}}}+c\right)  \left(  \alpha K+\beta W^{\frac{2}{3}}\right)
}. \label{T2Estimate}%
\end{equation}
Similar, for $T_{3}$ we get%
\[
T_{3}\leq\frac{2}{\sqrt{KR}}\max_{1\leq L\leq n_{1}-KR+1}\left\vert
\sum\limits_{n=0}^{L-1}e^{it\log\left(  KR+n\right)  }\right\vert .
\]
Observing that $R=\left\lfloor n_{1}/K\right\rfloor >n_{1}/K-1$ we first
estimate $n_{1}\leq\left(  R+1\right)  K-1$. Putting together we arrive at%
\begin{align*}
T_{3}  &  \leq\frac{2}{\sqrt{KR}}\max_{1\leq L\leq\left(  R+1\right)
K-1-KR+1}\left\vert \sum\limits_{n=0}^{L-1}e^{it\log\left(  KR+n\right)
}\right\vert \\
&  =\frac{2}{\sqrt{KR}}\max_{1\leq L\leq K}\left\vert \sum\limits_{n=0}%
^{L-1}e\left(  f\left(  n\right)  \right)  \right\vert \\
&  \leq\frac{2}{\sqrt{KR}}\sqrt{\left(  \frac{K}{W^{\frac{1}{3}}}+c\right)
\left(  \alpha K+\beta W^{\frac{2}{3}}\right)  }.
\end{align*}
Combining together with (\ref{T2Estimate}) and applying some standard
manipulations we finally arrive at
\begin{equation}
\label{T2T3Estimate}%
\begin{split}
T_{2}+T_{3}  &  \leq2\sum_{r=r_{0}}^{R}\frac{1}{\sqrt{Kr}}\sqrt{\left(
\frac{K}{W^{\frac{1}{3}}}+c\right)  \left(  \alpha K+\beta W^{\frac{2}{3}%
}\right)  }\\
&  =t^{\frac{1}{6}}\sum_{r=r_{0}}^{R}\frac{1}{\sqrt{r\left(  r+1\right)  }
}\sqrt{\frac{4}{\pi^{\frac{1}{3}}}\left(  \alpha+\frac{\alpha cW^{\frac{1}{3}
}}{K}+\frac{\beta W^{\frac{2}{3}}}{K}+\frac{c\beta W}{K^{2}}\right)  }.
\end{split}
\end{equation}
From $Kr_{0}-1=\left\lceil t^{\phi}\right\rceil r_{0}-1\geq$ $t^{\phi}%
r_{0}-1\geq t_{0}^{\phi}r_{0}-1$ and since $Kr_{0}-1$ is an integer it follows
that $Kr_{0}-1\geq\left\lceil t_{0}^{\phi}r_{0}-1\right\rceil =\left\lceil
t_{0}^{\phi}r_{0}\right\rceil -1.$ Thus, we can split the $T_{1}$ sum into two
parts and estimate further to
\begin{align*}
T_{1}  &  =2\sum\limits_{n=1}^{Kr_{0}-1}\frac{1}{\sqrt{n}}=2\sum
\limits_{n=1}^{\left\lceil t_{0}^{\phi}r_{0}\right\rceil -1}\frac{1}{\sqrt{n}%
}+2\sum\limits_{n=\left\lceil t_{0}^{\phi}r_{0}\right\rceil }^{Kr_{0}-1}%
\frac{1}{\sqrt{n}}\\
&  \leq2\sum\limits_{n=1}^{\left\lceil t_{0}^{\phi}r_{0}\right\rceil -1}%
\frac{1}{\sqrt{n}}+2\int\limits_{\left\lceil t_{0}^{\phi}r_{0}\right\rceil
-1}^{Kr_{0}-1}\frac{1}{\sqrt{x}}dx\\
&  =2\sum\limits_{n=1}^{\left\lceil t_{0}^{\phi}r_{0}\right\rceil -1}\frac
{1}{\sqrt{n}}+4\sqrt{Kr_{0}-1}-4\sqrt{\left\lceil t_{0}^{\phi}r_{0}%
\right\rceil -1}.
\end{align*}
Now, a standard argument on the inner term establishes
\begin{equation}
T_{1}\leq t^{\frac{\phi}{2}}4\sqrt{r_{0}\left(  1+\frac{1}{t_{0}^{\phi}%
}\right)  }+2\sum\limits_{n=1}^{\left\lceil t_{0}^{\phi}r_{0}\right\rceil
-1}\frac{1}{\sqrt{n}}-4\sqrt{\left\lceil t_{0}^{\phi}r_{0}\right\rceil -1}.
\label{T1Estimate}%
\end{equation}
Putting together (\ref{MainSumEstimate}), (\ref{MainSumBrake}),
(\ref{T2T3Estimate}) and (\ref{T1Estimate}) we obtain
\begin{equation}
\label{ZetaEstimate}%
\begin{split}
\left\vert \zeta\left(  \frac{1}{2}+it\right)  \right\vert  &  \leq
t^{\frac{1}{6}}\sum_{r=r_{0}}^{R}\frac{1}{\sqrt{r\left(  r+1\right)  }}
\sqrt{\frac{4}{\pi^{\frac{1}{3}}}\left(  \alpha+\frac{\alpha cW^{\frac{1}{3}}
}{K}+\frac{\beta W^{\frac{2}{3}}}{K}+\frac{c\beta W}{K^{2}}\right)  }\\
&  +t^{\frac{\phi}{2}}4\sqrt{r_{0}\left(  1+\frac{1}{t_{0}^{\phi}}\right)
}+2\sum\limits_{n=1}^{\left\lceil t_{0}^{\phi}r_{0}\right\rceil -1}\frac
{1}{\sqrt{n}}-4\sqrt{\left\lceil t_{0}^{\phi}r_{0}\right\rceil -1}\\
&  +1.463t^{-\frac{1}{4}}+0.127t^{-\frac{3}{4}}.
\end{split}
\end{equation}
Next, we apply the Jensen inequality and observe that
\begin{align*}
\sum_{r=r_{0}}^{R}\frac{1}{\sqrt{r\left(  r+1\right)  }}  &  \leq\frac
{1}{\sqrt{r_{0}\left(  r_{0}+1\right)  }}+\sum_{r=r_{0}+1}^{R}\int_{r-\frac
{1}{2}}^{r+\frac{1}{2}}\frac{dx}{\sqrt{x\left(  x+1\right)  }}\\
&  =\frac{1}{\sqrt{r_{0}\left(  r_{0}+1\right)  }}+\int_{r_{0}+\frac{1}{2}%
}^{R+\frac{1}{2}}\frac{dx}{\sqrt{x\left(  x+1\right)  }}\\
&  =\frac{1}{\sqrt{r_{0}\left(  r_{0}+1\right)  }}+2\sinh^{-1}\sqrt{x}%
\mid_{r_{0}+\frac{1}{2}}^{R+\frac{1}{2}}.
\end{align*}
Together with $\sinh^{-1}\sqrt{x}=\log\left(  \sqrt{x}+\sqrt{x+1}\right)  $,
(\ref{Intermediate1}), (\ref{Intermediate2}), by some easy manipulations, we
arrive at
\begin{equation}
\label{SumIntegralEstimate}%
\begin{split}
&  \sum_{r=r_{0}}^{R}\frac{1}{\sqrt{r\left(  r+1\right)  }}\\
&  \leq\left(  \frac{1}{2}-\phi\right)  \log t+\log\left(  1+\frac{1}{2R_{0}
}\right)  +2\log\left(  1+\sqrt{1+\frac{1}{R_{0}+\frac{1}{2}}}\right) \\
&  +\frac{1}{\sqrt{r_{0}\left(  r_{0}+1\right)  }}-\log\sqrt{2\pi}-2\sinh
^{-1}\sqrt{r_{0}+\frac{1}{2}}.
\end{split}
\end{equation}
Now, we recall the constants $M_{1}$, $M_{2}$, $M_{3}$ and $M_{4}$ from
(\ref{Intermediate4}), (\ref{Intermediate6}), (\ref{Intermediate8}),
(\ref{Intermediate12}) and we continue with the definition of the following
new constants
\begin{align*}
M_{5}  &  :=\log\left(  1+\frac{1}{2R_{0}}\right)  +2\log\left(
1+\sqrt{1+\frac{1}{R_{0}+\frac{1}{2}}}\right)  +\frac{1}{\sqrt{r_{0}\left(
r_{0}+1\right)  }}\\
&  \qquad-\log\sqrt{2\pi}-2\sinh^{-1}\sqrt{r_{0}+\frac{1}{2}},\\
M_{6}  &  :=4\sqrt{r_{0}\left(  1+\frac{1}{t_{0}^{\phi}}\right)  },\\
M_{7}  &  :=2\sum\limits_{n=1}^{\left\lceil t_{0}^{\phi}r_{0}\right\rceil
-1}\frac{1}{\sqrt{n}}-4\sqrt{\left\lceil t_{0}^{\phi}r_{0}\right\rceil -1}.
\end{align*}
Then, recalling (\ref{ZetaEstimate}) and (\ref{SumIntegralEstimate}), we
obtain%
\begin{equation}
\sum_{r=r_{0}}^{R}\frac{1}{\sqrt{r\left(  r+1\right)  }}\leq\left(  \frac
{1}{2}-\phi\right)  \log t+M_{5} \label{SquareSumEstimate}%
\end{equation}
and therefore
\begin{equation}
\label{GeneralFormula}%
\begin{split}
\left\vert \zeta\left(  \frac{1}{2}+it\right)  \right\vert  &  \leq
t^{\frac{1}{6}}\left(  \left(  \frac{1}{2}-\phi\right)  \log t+M_{5}\right)
\sqrt{\frac{4}{\pi^{\frac{1}{3}}}\left(  \alpha+\frac{\alpha cW^{\frac{1}{3}}
}{K}+\frac{\beta W^{\frac{2}{3}}}{K}+\frac{c\beta W}{K^{2}}\right)  }\\
&  \qquad+t^{\frac{\phi}{2}}M_{6}+M_{7}+1.463t^{-\frac{1}{4}}+0.127t^{-\frac
{3}{4}}.
\end{split}
\end{equation}
Finally, recalling (\ref{Intermediate4}), (\ref{Intermediate6}),
(\ref{Intermediate8}) and (\ref{Intermediate12}), we observe that%
\begin{align*}
\alpha &  \leq M_{1},\qquad t_{0}\leq t,\\
\alpha c\frac{W^{\frac{1}{3}}}{K}  &  \leq M_{2},\qquad t_{0}\leq t,\\
\beta\frac{W^{\frac{2}{3}}}{K}  &  \leq M_{3},\qquad t_{0}\leq t\leq t_{1},\\
c\beta\frac{W}{K^{2}}  &  \leq M_{4},\qquad t_{0}\leq t\leq t_{1}.
\end{align*}
Combining together with (\ref{GeneralFormula}) we finally arrive at%
\begin{equation}
\left\vert \zeta\left(  \frac{1}{2}+it\right)  \right\vert \leq a_{1}%
t^{\frac{1}{6}}\log t+a_{2}t^{\frac{1}{6}}+a_{3}t^{\frac{\phi}{2}}+a_{4}
\label{CorputSum1}%
\end{equation}
with the constants
\begin{equation}
\label{Coefficients1}%
\begin{split}
a_{1}  &  =\left(  \frac{1}{2}-\phi\right)  \sqrt{\frac{4}{\pi^{\frac{1}{3}}
}\left(  M_{1}+M_{2}+M_{3}+M_{4}\right)  },\\
a_{2}  &  =M_{5}\sqrt{\frac{4}{\pi^{\frac{1}{3}}}\left(  M_{1}+M_{2}
+M_{3}+M_{4}\right)  },\\
a_{3}  &  =M_{6},\\
a_{4}  &  =M_{7}+1.463t_{0}^{-\frac{1}{4}}+0.127t_{0}^{-\frac{3}{4}}.
\end{split}
\end{equation}

\subsection{Proof for the region $3.8\cdot10^{7}\leq t\leq8.97\cdot10^{17}$}

\label{ComputationalBottleneck}

Now, we will proceed through the intermediate bottleneck region which extends over the long
distance range
\[
3.80\cdot10^{7}\leq t\leq8.97\cdot10^{17}.
\]
Here, we split this region into $10$ subregions, starting at a certain value
$t_{0}$ and ending at $t_{1}$. As we will see later, the coefficients
(\ref{Coefficients1}) for the van der Corput bound (\ref{CorputSum1}) are
highly sensitive with respect to these parameters, especially at the beginning
of this process at $t_{0}=3.8\cdot10^{7}$.

\subsubsection{Bottleneck region $3.80\cdot10^{7} \leq t \leq3.85 \cdot10^{7}%
$}

Together with (\ref{CorputSum1}), (\ref{Coefficients1}) and selecting
$t_{0}=3.80\cdot10^{7}$, $t_{1} = 3.85\cdot10^{7}$, $c=1.82$ and $\phi=0.33559$
we obtain
\begin{align*}
\left\vert \zeta\left(  1/2+it\right)  \right\vert  &  \leq 0.622447t^{1/6}\log
t-8.21438t^{1/6}+8.01143t^{\phi/2}-2.87533\\
&  \leq0.611 t^{1/6}\log t,\text{ for }t_{0}\leq t\leq t_{1}.
\end{align*}

\subsubsection{Bottleneck region $3.85\cdot10^{7} \leq t \leq3.96 \cdot10^{7}%
$}

Together with (\ref{CorputSum1}), (\ref{Coefficients1}) and selecting
$t_{0}=3.85\cdot10^{7}$, $t_{1} = 3.96\cdot10^{7}$, $c=1.82$ and $\phi=0.33576$
we obtain%
\begin{align*}
\left\vert \zeta\left(  1/2+it\right)  \right\vert  &  \leq 0.621314t^{1/6}\log
t-8.20793t^{1/6}+8.01135t^{\phi/2}-2.8755\\
&  \leq0.611 t^{1/6}\log t,\text{ for }t_{0}\leq t\leq t_{1}.
\end{align*}

\subsubsection{Bottleneck region $3.96\cdot10^{7} \leq t \leq4.12 \cdot10^{7}%
$}

Together with (\ref{CorputSum1}), (\ref{Coefficients1}) and selecting
$t_{0}=3.96\cdot10^{7}$, $t_{1} = 4.12\cdot10^{7}$, $c=1.82$ and $\phi=0.33599$
we obtain%
\begin{align*}
\left\vert \zeta\left(  1/2+it\right)  \right\vert  &  \leq 0.619747t^{1/6}\log
t-8.19871t^{1/6}+8.01119t^{\phi/2}-2.8758\\
&  \leq0.611 t^{1/6}\log t,\text{ for }t_{0}\leq t\leq t_{1}.
\end{align*}

\subsubsection{Bottleneck region $4.12\cdot10^{7} \leq t \leq4.43 \cdot10^{7}%
$}

Together with (\ref{CorputSum1}), (\ref{Coefficients1}) and selecting
$t_{0}=4.12\cdot10^{7}$, $t_{1} = 4.43\cdot10^{7}$, $c=1.82$ and $\phi=0.33643$
we obtain%
\begin{align*}
\left\vert \zeta\left(  1/2+it\right)  \right\vert  &  \leq 0.61681t^{1/6}\log
t-8.1818t^{1/6}+8.01096t^{\phi/2}-2.87626\\
&  \leq0.611 t^{1/6}\log t,\text{ for }t_{0}\leq t\leq t_{1}.
\end{align*}

\subsubsection{Bottleneck region $4.43\cdot10^{7} \leq t \leq4.96 \cdot10^{7}%
$}

Together with (\ref{CorputSum1}), (\ref{Coefficients1}) and selecting
$t_{0}=4.43\cdot10^{7}$, $t_{1} = 4.96\cdot10^{7}$, $c=1.82$ and $\phi=0.33711$
we obtain%
\begin{align*}
\left\vert \zeta\left(  1/2+it\right)  \right\vert  &  \leq 0.612254t^{1/6}\log
t-8.15526t^{1/6}+8.01057t^{\phi/2}-2.87706\\
&  \leq0.611 t^{1/6}\log t,\text{ for }t_{0}\leq t\leq t_{1}.
\end{align*}

\subsubsection{Bottleneck region $4.96\cdot10^{7} \leq t \leq5.82 \cdot10^{7}%
$}

Together with (\ref{CorputSum1}), (\ref{Coefficients1}) and selecting
$t_{0}=4.96\cdot10^{7}$, $t_{1} = 5.82\cdot10^{7}$, $c=1.81$ and $\phi=0.33816$
we obtain%
\begin{align*}
\left\vert \zeta\left(  1/2+it\right)  \right\vert  &  \leq 0.605265t^{1/6}\log
t-8.11448t^{1/6}+8.00999t^{\phi/2}-2.87828\\
&  \leq0.611 t^{1/6}\log t,\text{ for }t_{0}\leq t\leq t_{1}.
\end{align*}

\subsubsection{Bottleneck region $5.82\cdot10^{7} \leq t \leq7.06 \cdot10^{7}%
$}

Together with (\ref{CorputSum1}), (\ref{Coefficients1}) and selecting
$t_{0}=5.82\cdot10^{7}$, $t_{1} = 7.06\cdot10^{7}$, $c=1.81$ and $\phi=0.33962$
we obtain%
\begin{align*}
\left\vert \zeta\left(  1/2+it\right)  \right\vert  &  \leq 0.595642t^{1/6}\log
t-8.05816t^{1/6}+8.00922t^{\phi/2}-2.87995\\
&  \leq0.611 t^{1/6}\log t,\text{ for }t_{0}\leq t\leq t_{1}.
\end{align*}

\subsubsection{Bottleneck region $7.06\cdot10^{7} \leq t \leq8.24 \cdot10^{7}%
$}

Together with (\ref{CorputSum1}), (\ref{Coefficients1}) and selecting
$t_{0}=7.06\cdot10^{7}$, $t_{1} = 8.24\cdot10^{7}$, $c=1.80$ and $\phi=0.34098$
we obtain%
\begin{align*}
\left\vert \zeta\left(  1/2+it\right)  \right\vert  &  \leq 0.58663t^{1/6}\log
t-8.00412t^{1/6}+8.00842t^{\phi/2}-2.8818\\
&  \leq0.611 t^{1/6}\log t,\text{ for }t_{0}\leq t\leq t_{1}.
\end{align*}

\subsubsection{Bottleneck region $8.24\cdot10^{7} \leq t \leq3.92 \cdot10^{8}%
$}

Together with (\ref{CorputSum1}), (\ref{Coefficients1}) and selecting
$t_{0}=8.24\cdot10^{7}$, $t_{1} = 3.92\cdot10^{8}$, $c=1.83$ and $\phi=0.33536$
we obtain%
\begin{align*}
\left\vert \zeta\left(  1/2+it\right)  \right\vert  &  \leq 0.617933t^{1/6}\log
t-8.20176t^{1/6}+8.00885t^{\phi/2}-2.88182\\
&  \leq0.611 t^{1/6}\log t,\text{ for }t_{0}\leq t\leq t_{1}.
\end{align*}

\subsubsection{Bottleneck region $3.92\cdot10^{8} \leq t \leq8.97 \cdot10^{17}%
$}
\label{LastBottleneck}

Together with (\ref{CorputSum1}), (\ref{Coefficients1}) and selecting
$t_{0}=3.92\cdot10^{8}$, $t_{1} = 8.97\cdot10^{17}$, $c=1.83$ and $\phi=0.33711$
we obtain%
\begin{align*}
\left\vert \zeta\left(  1/2+it\right)  \right\vert  &  \leq 0.597618t^{1/6}\log
t-8.09885t^{1/6}+8.00507t^{\phi/2}-2.89251\\
&  \leq0.611 t^{1/6}\log t,\text{ for }t_{0}\leq t\leq t_{1}.
\end{align*}

\subsection{Proof for the region $8.97\cdot10^{17}\leq t$}

Here, we use a similar method as in subsection \ref{Bottleneckregion}. We
start with parameters $8.97\cdot10^{17}=t_{0}\leq t$, $\phi=1/3$ together with
our previous parameters $K=\left\lceil t^{\phi}\right\rceil $, $K_{0}%
=t_{0}^{\phi}$, $n_{1}=\left\lfloor \sqrt{t/2\pi}\right\rfloor $,
$R=\left\lfloor n_{1}/K\right\rfloor $, $4=r_{0}\leq r\leq R$. We also recall
$R_{0}$ from subsection \ref{Bottleneckregion}.

\medskip\noindent Before we begin, we should observe from subsection
\ref{Bottleneckregion} that the assertions (\ref{Intermediate1}), $4\leq
R_{0}\leq R$ from (\ref{Intermediate2}), (\ref{Intermediate4}),
(\ref{Intermediate6}), (\ref{MainSumEstimate}), (\ref{MainSumBrake}),
(\ref{T2T3Estimate}), (\ref{ZetaEstimate}) and (\ref{SquareSumEstimate}) are
still valid without the former restriction $t\leq t_{1}$. Then, we estimate
\begin{equation}
\label{Intermediate24}%
\begin{split}
\frac{1}{r\left(  r+1\right)  }\frac{W^{\frac{2}{3}}}{K}  &  =\frac{\pi
^{\frac{2}{3}}\left(  r+1\right)  ^{2}K^{2}}{r\left(  r+1\right)  t^{\frac
{2}{3}}K}=\frac{\pi^{\frac{2}{3}}\left(  r+1\right)  K}{rt^{\frac{2}{3}}}\\
&  \leq\frac{\pi^{\frac{2}{3}}}{t^{\frac{2}{3}}}\left(  1+\frac{1}{r}\right)
t^{\frac{1}{3}}\left(  1+\frac{1}{t^{\frac{1}{3}}}\right) \\
&  \leq\frac{\pi^{\frac{2}{3}}}{t^{\frac{1}{3}}}\left(  1+\frac{1}{r_{0}%
}\right)  \left(  1+\frac{1}{t_{0}^{\frac{1}{3}}}\right)  .
\end{split}
\end{equation}
Similarly, we estimate%
\begin{equation}
\label{Intermediate25}%
\begin{split}
\frac{1}{r\left(  r+1\right)  }\frac{W}{K^{2}}  &  =\frac{\pi\left(
r+1\right)  ^{3}K^{3}}{r\left(  r+1\right)  tK^{2}}=\pi r\left(  1+\frac{1}
{r}\right)  ^{2}\frac{K}{t}\\
&  \leq\pi R\left(  1+\frac{1}{r}\right)  ^{2}\frac{K}{t}\leq\pi n_{1}\left(
1+\frac{1}{r}\right)  ^{2}\frac{1}{t}\\
&  \leq\frac{\pi^{\frac{1}{2}}}{t^{\frac{1}{3}}\sqrt{2}}\left(  1+\frac
{1}{r_{0}}\right)  ^{2}\frac{1}{t_{0}^{\frac{1}{6}}}.
\end{split}
\end{equation}
A standard argument gives us%
\begin{equation}
W^{\frac{2}{3}}\geq\left(  1+r_{0}\right)  ^{2}\pi^{\frac{2}{3}}
\label{Intermediate26}%
\end{equation}
and%
\begin{equation}
\beta\leq\frac{32}{3\sqrt{\pi c}}+\frac{1}{c}\left(  \frac{6}{\pi}-1\right)
\frac{1}{W^{\frac{2}{3}}}. \label{Intermediate27}%
\end{equation}
Now, recalling the $T_{2}$ and $T_{3}$ sums from (\ref{MainSumBrake}) together
with (\ref{T2T3Estimate}), we estimate
\begin{equation}
\label{T2T3T4T5}%
\begin{split}
T_{2}+T_{3}  &  \leq t^{\frac{1}{6}}\sum_{r=r_{0}}^{R}\frac{1}{\sqrt{r\left(
r+1\right)  }}\sqrt{\frac{4}{\pi^{\frac{1}{3}}}\left(  \alpha+\frac{\alpha
cW^{\frac{1}{3}}}{K}+\frac{\beta W^{\frac{2}{3}}}{K}+\frac{c\beta W}{K^{2}
}\right)  }\\
&  \leq t^{\frac{1}{6}}\sum_{r=r_{0}}^{R}\frac{1}{\sqrt{r\left(  r+1\right)
}}\sqrt{\frac{4}{\pi^{\frac{1}{3}}}\left(  \alpha+\frac{\alpha cW^{\frac{1}
{3}}}{K}\right)  }\\
&  +t^{\frac{1}{6}}\sum_{r=r_{0}}^{R}\sqrt{\frac{4}{\pi^{\frac{1}{3}}}\left(
\frac{1}{r\left(  r+1\right)  }\frac{\beta W^{\frac{2}{3}}}{K}+\frac
{1}{r\left(  r+1\right)  }\frac{c\beta W}{K^{2}}\right)  }\\
&  =T_{4}+T_{5}.
\end{split}
\end{equation}
We start with the $T_{4}$ sum. Recalling (3.4), (3.6) and (3.20) we estimate
\begin{equation}
\label{T4Estimate}%
\begin{split}
T_{4}  &  \leq t^{\frac{1}{6}}\sqrt{\frac{4}{\pi^{\frac{1}{3}}}\left(
M_{1}+M_{2}\right)  }\sum_{r=r_{0}}^{R}\frac{1}{\sqrt{r\left(  r+1\right)  }
}\\
&  \leq t^{\frac{1}{6}}\left(  \frac{1}{6}\log t+M_{5}\right)  \sqrt{\frac
{4}{\pi^{\frac{1}{3}}}\left(  M_{1}+M_{2}\right)  }\\
&  =t^{\frac{1}{6}}\log t\frac{1}{3\pi^{\frac{1}{6}}}\sqrt{M_{1}+M_{2}
}+t^{\frac{1}{6}}\frac{2}{\pi^{\frac{1}{6}}}M_{5}\sqrt{M_{1}+M_{2}}.
\end{split}
\end{equation}
For the remaining sum $T_{5}$ we operate as follows. Recalling
(\ref{Intermediate1}), (\ref{Intermediate24}), (\ref{Intermediate25}),
(\ref{Intermediate26}) and (\ref{Intermediate27}), we obtain
\begin{equation}
\label{T5Estimate}%
\begin{split}
T_{5}  &  =t^{\frac{1}{6}}\sum_{r=r_{0}}^{R}\sqrt{\frac{4}{\pi^{\frac{1}{3}}
}\left(  \frac{1}{r\left(  r+1\right)  }\frac{\beta W^{\frac{2}{3}}}{K}
+\frac{1}{r\left(  r+1\right)  }\frac{c\beta W}{K^{2}}\right)  }\\
&  \leq\sqrt{\frac{4\beta}{\pi^{\frac{1}{3}}}\left(  \pi^{\frac{2}{3}}\left(
1+\frac{1}{r_{0}}\right)  \left(  1+\frac{1}{t_{0}^{\frac{1}{3}}}\right)
+\frac{c\pi^{\frac{1}{2}}}{\sqrt{2}}\left(  1+\frac{1}{r_{0}}\right)
^{2}\frac{1}{t_{0}^{\frac{1}{6}}}\right)  }\sum_{r=r_{0}}^{R}1\\
&  =\left(  R-3\right)  \sqrt{5\beta\pi^{\frac{1}{3}}\left(  1+\frac{1}
{t_{0}^{\frac{1}{3}}}+\frac{5c}{4\sqrt{2}\pi^{\frac{1}{6}}}\frac{1}
{t_{0}^{\frac{1}{6}}}\right)  }\\
&  \leq\left(  \frac{1}{\sqrt{2\pi}}t^{\frac{1}{6}}-3\right)  M_{8},
\end{split}
\end{equation}
where%
\[
M_{8}:=\sqrt{5}\pi^{\frac{1}{6}}\sqrt{\left(  \frac{32}{3\sqrt{\pi c}}+\frac
{1}{c}\left(  \frac{6}{\pi}-1\right)  \frac{1}{25\pi^{\frac{2}{3}}}\right)
\left(  1+\frac{1}{t_{0}^{\frac{1}{3}}}+\frac{5c}{4\sqrt{2}\pi^{\frac{1}{6}%
}t_{0}^{\frac{1}{6}}}\right)  }.
\]
As for the $T_{1}$ sum we can operate exactly as in (\ref{T1Estimate}).
Finally, from (\ref{MainSumEstimate}), (\ref{MainSumBrake}), (\ref{T1Estimate}%
), (\ref{T2T3T4T5}), (\ref{T4Estimate}) and (\ref{T5Estimate}), we obtain
\begin{equation}
\left\vert \zeta\left(  \frac{1}{2}+it\right)  \right\vert \leq b_{1}%
t^{\frac{1}{6}}\log t+b_{2}t^{\frac{1}{6}}+b_{3} \label{CorputSum2}%
\end{equation}
with the constants
\begin{align*}
b_{1}  &  =\frac{1}{3\pi^{\frac{1}{6}}}\sqrt{M_{1}+M_{2}},\\
b_{2}  &  =M_{6}+\frac{2M_{5}}{\pi^{\frac{1}{6}}}\sqrt{M_{1}+M_{2}}%
+\frac{M_{8}}{\sqrt{2 \pi}},\\
b_{3}  &  =M_{7}-3M_{8}+1.463t_{0}^{-\frac{1}{4}}+0.127t_{0}^{-\frac{3}{4}}.
\end{align*}
Now, recalling $\phi=1/3$ and setting $c=1.34$ together with $t_{0}%
=8.97\cdot10^{17}$ we finally can finish the proof of Theorem \ref{theorem1}(a)
by
\begin{equation}
\label{FreeEndBound}%
\begin{split}
\left\vert \zeta\left(  1/2+it\right)  \right\vert  &  \leq 0.469028 t^{1/6}\log
t+3.99399t^{1/6}-21.4623\\
&  \leq0.611 t^{1/6}\log t\text{, for }t_{0}\leq t.
\end{split}
\end{equation}

\begin{remark}
We would like to mention that our method also works correctly from the
starting point $t_{0}=7.00\cdot10^{11}$. The computational bound
(\ref{CorputSum2}) then reads
\begin{align*}
\left\vert \zeta\left(  1/2+it\right)  \right\vert  &  \leq 0.470795 t^{1/6}\log
t+4.04972t^{1/6}-21.5437\\
&  \leq0.611 t^{1/6}\log t\text{, for } t_{0}\leq t.
\end{align*}
Thus, there is an overlap inside the last bottleneck region in subsection \ref{LastBottleneck} for the range $7.00\cdot10^{11}
\leq t \leq8.97 \cdot10^{17}$, where both formulas (\ref{CorputSum1}) and
(\ref{CorputSum2}) can be used simultaneously.
\end{remark}

\begin{remark}
We should also mention that the $T_{5}$ sum in (\ref{T5Estimate}) can be
estimated more accurately using a method similar to that presented in
(\ref{SumIntegralEstimate}). However, the overall improvement is marginal.
\end{remark}

\par\noindent Proof for assertion (b) in Theorem \ref{theorem1}:
As for the last region, which starts at $t_{0}=8.97\cdot10^{17}$, we can
improve the last estimate in (\ref{FreeEndBound}) further to
\begin{align*}
\left\vert \zeta\left(  1/2+it\right)  \right\vert  &  \leq 0.469028 t^{1/6}\log
t+3.99399t^{1/6}-21.4623\\
&  \leq0.566 t^{1/6}\log t\text{, for }t_{0}\leq t,
\end{align*}
which gives the mentioned bound in Theorem \ref{theorem1}(b).

\section{Proofs for Lemma \ref{SecondderivativePart1} and Lemma
\ref{SecondderivativePart2}}

\label{Secondderivateproof}

Before we begin, we should mention that our proof of Lemma
\ref{SecondderivativePart1} is similar to the proof of Lemma 2.5 in
\cite{Hiary2}, though it contains some modifications and important extensions.
The proof of Lemma \ref{SecondderivativePart2} follows the general structure
presented in \cite{Hiary2} for Lemma 2.6, combined together with some
refinements and improvements.

\subsection{Proof for Lemma \ref{SecondderivativePart1}}

\noindent(a) Using some standard methods the assertion is easily established.

\noindent(b) We define
\begin{align*}
k  &  =\left\lfloor g^{\prime}\left(  L-1-m\right)  \right\rfloor
-\left\lfloor g^{\prime}\left(  0\right)  \right\rfloor ,\\
C_{0}  &  =\left\lfloor g^{\prime}\left(  0\right)  \right\rfloor ,\\
C_{j}  &  =C_{0}+j\text{ for }1\leq j\leq k+1.
\end{align*}
Then, by monotonicity of $g^{\prime}$, we find that $k$ is a non negative
integer and $C_{k}=\left\lfloor g^{\prime}\left(  L-1-m\right)  \right\rfloor
$. Furthermore, with our definitions for $\varepsilon_{1}$ and $\varepsilon
_{2}$ together with Lemma \ref{Functionestimate}, we obtain
\begin{equation}
\label{Estimatek}%
\begin{split}
k  &  =g^{\prime}\left(  L-1-m\right)  -g^{\prime}\left(  0\right)
+\varepsilon_{1}-\varepsilon_{2}\\
&  \leq\frac{mK}{W}\mu+\varepsilon_{1}-\varepsilon_{2}.
\end{split}
\end{equation}
Now, we select a fixed value $0<\Delta<1/2$. We first assume that $k \geq2$.
The cases $k=0,1$ are treated separately later.

\subsubsection{Proof for the case $k \geq2$}

\noindent We define the following segmentation process for the interval
$\left[  0,L-1-m\right]  $:
\begin{align*}
x_{1}  &  =\max\left\{  \left(  g^{\prime}\right)  ^{-1}\left(  C_{1}%
-\Delta\right)  ,0\right\}  ,\\
x_{j}  &  =\left(  g^{\prime}\right)  ^{-1}\left(  C_{j}-\Delta\right)  \text{
for }2\leq j\leq k+1,\\
y_{j}  &  =\left(  g^{\prime}\right)  ^{-1}\left(  C_{j}+\Delta\right)  \text{
for }0\leq j\leq k-1,\\
y_{k}  &  =\min\left\{  \left(  g^{\prime}\right)  ^{-1}\left(  C_{k}%
+\Delta\right)  ,L-1-m\right\}  .
\end{align*}
One first checks that all these points are well defined. Then, using some
standard arguments together with the observation that $g^{\prime}$ and
$\left(  g^{\prime}\right)  ^{-1}$ are increasing, we arrive at the following
segmentation
\begin{equation}%
\begin{split}
0\leq x_{1}<y_{1}<x_{2}<y_{2}<  &  \cdots<x_{k-1}<y_{k-1}<x_{k}<y_{k}\leq
L-1-m\\
y_{0}<x_{1}  &  \text{ and }y_{k}<x_{k+1}.
\end{split}
\label{Segmentation}%
\end{equation}
The idea behind is that we divide our summation interval into about $2k$
subintervals, where the subintervals will be chosen so that we may apply the
refined Kusmin-Landau bound, Lemma \ref{KusminLandau}, on about half of the
subintervals and the trivial bound for the exponential terms on the remaining
half. Thus we select the subintervals of $\left[  0,L-1-m\right]  $ as
$\left[  0,x_{1}\right)  $, $\left[  x_{1},y_{1}\right]  $, $\left(
y_{1},x_{2}\right)  $,$\ldots$, $\left(  y_{k-1},x_{k-1}\right)  $, $\left[
x_{k},y_{k}\right]  $ and $\left(  y_{k},L-1-m\right]  $ using the trivial
bound for $\left[  x_{j},y_{j}\right]  $ for $1\leq j\leq k$ and the
Kusmin-Landau bound for $\left(  y_{j},x_{j+1}\right)  $ for $1\leq j\leq k-1$
and the subintervals $\left[  0,x_{1}\right)  $ and $\left(  y_{k}%
,L-1-m\right]  $ to be treated separately.

\noindent Next, using again the monotonicity for $\left(  g^{\prime}\right)
^{-1}$, we obtain
\begin{align*}
y_{1}-x_{1}  &  =\left(  g^{\prime}\right)  ^{-1}\left(  C_{1}+\Delta\right)
-\max\left\{  \left(  g^{\prime}\right)  ^{-1}\left(  C_{1}-\Delta\right)
,0\right\} \\
&  =\left(  g^{\prime}\right)  ^{-1}\left(  C_{1}+\Delta\right)  -\left(
g^{\prime}\right)  ^{-1}\left(  \max\left\{  C_{1}-\Delta,g^{\prime}\left(
0\right)  \right\}  \right) \\
&  \leq\left(  \left(  g^{\prime}\right)  ^{-1}\right)  ^{\prime}\left(
\nu_{1}\right)  \left(  C_{1}+\Delta-C_{1}+\Delta\right)
\end{align*}
for some $\max\left\{  C_{1}-\Delta,g^{\prime}\left(  0\right)  \right\}
<\upsilon_{1}<C_{1}+\Delta$. Upon setting $\delta_{1}=\left(  g^{\prime
}\right)  ^{-1}\left(  \upsilon_{1}\right)  $ this implies $x_{1}=\max\left\{
\left(  g^{\prime}\right)  ^{-1}\left(  C_{1}-\Delta\right)  ,0\right\}
<\delta_{1}<\left(  g^{\prime}\right)  ^{-1}\left(  C_{1}+\Delta\right)
=y_{1}$ and together with (\ref{Segmentation}) and Lemma
\ref{SecondderivativePart1}(a) we arrive at
\[
\left(  \left(  g^{\prime}\right)  ^{-1}\right)  ^{\prime}\left(  \nu
_{1}\right)  2\Delta=\frac{2\Delta}{\left(  g^{\prime}\right)  ^{\prime
}\left(  \left(  g^{\prime}\right)  ^{-1}\left(  \upsilon_{1}\right)  \right)
}=\frac{2\Delta}{g^{\prime\prime}\left(  \delta_{1}\right)  }\leq\frac{2W}%
{m}\Delta.
\]
We conclude that $y_{1}-x_{1}\leq2W\Delta/m$. The cases $y_{j}-x_{j}$ for
$2\leq j\leq k-1$ and $y_{k}-x_{k}$ are treated similarly and we obtain for $1
\leq j \leq k$
\[
y_{j}-x_{j} \leq\frac{2W}{m}\Delta.
\]

\noindent Using the triangle inequality we now estimate the sums for the
exponential terms by
\begin{equation}
\left\vert \sum_{x_{j}\leq n\leq y_{j}}e\left(  g\left(  n\right)  \right)
\right\vert \leq\sum_{x_{j}\leq n\leq y_{j}}\left\vert e\left(  g\left(
n\right)  \right)  \right\vert \leq y_{j}-x_{j}+1\leq1+\frac{2W}{m}\Delta.
\label{FirstSumEstimate}%
\end{equation}
\noindent For the remaining subintervals $\left(  y_{j},x_{j+1}\right)  $ for
$1\leq j\leq k-1$ we have by construction $C_{j}+\Delta\leq g^{\prime}\left(
x\right)  \leq C_{j+1}-\Delta=C_{j}+1-\Delta$ and we select in Lemma
\ref{KusminLandau} the values $z=C_{j}$, $U=\Delta$ and $V=1-\Delta$. While we
are here summing over the integers $n$ in the open interval $\left(
y_{j},x_{j+1}\right)  $ and observing that the bound in Lemma
\ref{KusminLandau} is independent of the length of summation we may apply
Lemma \ref{KusminLandau} directly to our case and we obtain
\begin{equation}
\left\vert \sum_{y_{j}<n<x_{j+1}}e\left(  g\left(  n\right)  \right)
\right\vert \leq\frac{1}{\pi}\left(  \frac{1}{\Delta}+\frac{1}{1-\left(
1-\Delta\right)  }\right)  =\frac{2}{\pi\Delta}. \label{SecondSumEstimate}%
\end{equation}
\noindent It remains to consider the boundary intervals $\left[
0,x_{1}\right)  $ and $\left(  y_{k},L-1-m\right]  $.

\noindent We start with $\left[  0,x_{1}\right)  $ for which we derive an
estimate for the sum
\[
\left\vert \sum_{0\leq n<x_{1}}e\left(  g\left(  n\right)  \right)
\right\vert .
\]
We consider the following three cases:

\noindent Case 1: $\varepsilon_{1}\in\left[  0,\Delta\right)  $.%
\begin{align*}
y_{0}  &  =\left(  g^{\prime}\right)  ^{-1}\left(  C_{0}+\Delta\right)
=\left(  g^{\prime}\right)  ^{-1}\left(  \left\lfloor g^{\prime}\left(
0\right)  \right\rfloor +\Delta\right) \\
&  =\left(  g^{\prime}\right)  ^{-1}\left(  g^{\prime}\left(  0\right)
-\varepsilon_{1}+\Delta\right)  >\left(  g^{\prime}\right)  ^{-1}\left(
g^{\prime}\left(  0\right)  \right)  =0.
\end{align*}
Together with (\ref{Segmentation}) we get $0<y_{0}<x_{1}$ and%
\begin{align*}
y_{0}-0  &  =\left(  g^{\prime}\right)  ^{-1}\left(  C_{0}+\Delta\right)  -0\\
&  =\left(  g^{\prime}\right)  ^{-1}\left(  C_{0}+\Delta\right)  -\left(
g^{\prime}\right)  ^{-1}\left(  g^{\prime}\left(  0\right)  \right) \\
&  =\left(  \left(  g^{\prime}\right)  ^{-1}\right)  ^{\prime}\left(  \nu
_{0}\right)  \left(  C_{0}+\Delta-g^{\prime}\left(  0\right)  \right)
\end{align*}
for some $g^{\prime}\left(  0\right)  <\upsilon_{0}<C_{0}+\Delta$. Upon
setting $\delta_{0}=\left(  g^{\prime}\right)  ^{-1}\left(  \upsilon
_{0}\right)  $ this implies $0<\delta_{0}<\left(  g^{\prime}\right)  \left(
C_{0}+\Delta\right)  =y_{0}$ and together with (\ref{Segmentation}) and Lemma
\ref{SecondderivativePart1}(a) we arrive at%
\begin{align*}
y_{0}-0  &  =\frac{1}{\left(  g^{\prime}\right)  ^{\prime}\left(  \left(
g^{\prime}\right)  ^{-1}\left(  \upsilon_{0}\right)  \right)  }\left(
C_{0}+\Delta-g^{\prime}\left(  0\right)  \right) \\
&  =\frac{1}{g^{\prime\prime}\left(  \delta_{0}\right)  }\left(  \left\lfloor
g^{\prime}\left(  0\right)  \right\rfloor +\Delta-g^{\prime}\left(  0\right)
\right) \\
&  =\frac{1}{g^{\prime\prime}\left(  \delta_{0}\right)  }\left(
\Delta-\varepsilon_{1}\right) \\
&  \leq\frac{W}{m}\left(  \Delta-\varepsilon_{1}\right)  .
\end{align*}
Thus we estimate our sum, analogously as in (\ref{SecondSumEstimate}), by
\begin{equation}
\label{LeftBoundaryCase1}%
\begin{split}
\left\vert \sum_{0\leq n<x_{1}}e\left(  g\left(  n\right)  \right)
\right\vert  &  \leq\left\vert \sum_{0\leq n<y_{0}}e\left(  g\left(  n\right)
\right)  \right\vert +\left\vert \sum_{y_{0}\leq n<x_{1}}e\left(  g\left(
n\right)  \right)  \right\vert \\
&  \leq y_{0}-0+1+\frac{2}{\pi\Delta}\\
&  \leq1+\frac{2}{\pi\Delta}+\frac{W}{m}\left(  \Delta-\varepsilon_{1}\right)
.
\end{split}
\end{equation}

\noindent Case 2: $\varepsilon_{1}\in\left[  \Delta,1-\Delta\right]  .$ Here
one simply sees that $y_{0}\leq0$, hence we do not split the sum as before.
Next, for $0\leq n<x_{1}$ we have
\[
C_{0}+\varepsilon_{1}=\left\lfloor g^{\prime}\left(  0\right)  \right\rfloor
+\varepsilon_{1}=g^{\prime}\left(  0\right)  \leq g^{\prime}\left(  n\right)
<g^{\prime}\left(  x_{1}\right)  =C_{1}-\Delta=C_{0}+1-\Delta
\]
and we set in Lemma \ref{KusminLandau} the values $z=C_{0}$, $U=\varepsilon
_{1}$ and $V=1-\Delta$. We obtain%
\begin{equation}
\left\vert \sum_{0\leq n<x_{1}}e\left(  g\left(  n\right)  \right)
\right\vert \leq\frac{1}{\pi}\left(  \frac{1}{\varepsilon_{1}}+\frac{1}%
{\Delta}\right)  . \label{LeftBoundaryCase2}%
\end{equation}

\noindent Case 3: $\varepsilon_{1}\in\left(  1-\Delta,1\right]  .$%
\begin{align*}
\left(  g^{\prime}\right)  ^{-1}\left(  C_{1}-\Delta\right)   &  =\left(
g^{\prime}\right)  ^{-1}\left(  C_{0}+1-\Delta\right) \\
&  =\left(  g^{\prime}\right)  ^{-1}\left(  g^{\prime}\left(  0\right)
+1-\Delta-\varepsilon_{1}\right) \\
&  <\left(  g^{\prime}\right)  ^{-1}\left(  g^{\prime}\left(  0\right)
\right)  =0
\end{align*}
and by definition for $x_{1}$ we obtain $x_{1}=\max\left\{  \left(  g^{\prime
}\right)  ^{-1}\left(  C_{1}-\Delta\right)  ,0\right\}  =0$. So we arrive at
an empty sum%
\begin{equation}
\left\vert \sum_{0\leq n<x_{1}}e\left(  g\left(  n\right)  \right)
\right\vert =0. \label{LeftBoundaryCase3}%
\end{equation}
The right-hand boundary sum%
\[
\left\vert \sum_{y_{k}<n\leq L-1-m}e\left(  g\left(  n\right)  \right)
\right\vert
\]
is estimated in an similar way. We omit the proof but we refer that here the
case $\varepsilon_{2}\in\left[  0,\Delta\right)  $ corresponds to the previous
case $\varepsilon_{1}\in\left(  1-\Delta,1\right]  $, that is%
\begin{equation}
\left\vert \sum_{y_{k}<n\leq L-1-m}e\left(  g\left(  n\right)  \right)
\right\vert =0. \label{RightBoundaryCase1}%
\end{equation}
The case $\varepsilon_{2}\in\left[  \Delta,1-\Delta\right]  $ corresponds to
the previous case $\varepsilon_{1}\in\left[  \Delta,1-\Delta\right]  $, and we
obtain%
\begin{equation}
\left\vert \sum_{y_{k}<n\leq L-1-m}e\left(  g\left(  n\right)  \right)
\right\vert \leq\frac{1}{\pi}\left(  \frac{1}{\Delta}+\frac{1}{1-\varepsilon
_{2}}\right)  . \label{RightBoundaryCase2}%
\end{equation}
Finally, the last remaining case $\varepsilon_{2}\in\left(  1-\Delta,1\right]
$ corresponds to the previous case $\varepsilon_{1}\in\left[  0,\Delta\right)
$, and here we obtain the bound
\begin{equation}
\left\vert \sum_{y_{k}<n\leq L-1-m}e\left(  g\left(  n\right)  \right)
\right\vert \leq1+\frac{2}{\pi\Delta}+\frac{W}{m}\left(  \Delta-\left(
1-\varepsilon_{2}\right)  \right)  . \label{RightBoundaryCase3}%
\end{equation}
Collecting the results (\ref{LeftBoundaryCase1}), (\ref{LeftBoundaryCase2}),
(\ref{LeftBoundaryCase3}), (\ref{RightBoundaryCase1}),
(\ref{RightBoundaryCase2}) and (\ref{RightBoundaryCase3}) together with the
definition for $p\left(  \varepsilon\right)  $ in Lemma
\ref{SecondderivativePart1}(b) we finally arrive for $0\leq\varepsilon
_{1},\varepsilon_{2}\leq1$ at%
\begin{align}
\left\vert \sum_{0\leq n<x_{1}}e\left(  g\left(  n\right)  \right)
\right\vert  &  \leq p\left(  \varepsilon_{1}\right)  ,\label{LeftCase}\\
\left\vert \sum_{y_{k}<n\leq L-1-m}e\left(  g\left(  n\right)  \right)
\right\vert  &  \leq p\left(  1-\varepsilon_{2}\right)  . \label{RightCase}%
\end{align}
Now, we can estimate the main sum. Let $m<L\leq K$. Then, using
(\ref{Estimatek}), (\ref{Segmentation}), (\ref{FirstSumEstimate}),
(\ref{SecondSumEstimate}), (\ref{LeftCase}) and (\ref{RightCase}), we obtain
\begin{equation}%
\begin{split}
&  \left\vert \sum_{n=0}^{L-1-m}e\left(  g\left(  n\right)  \right)
\right\vert \\
&  \leq\left\vert \sum_{0\leq n<x_{1}}e\left(  g\left(  n\right)  \right)
\right\vert +\sum_{j=1}^{k}\left\vert \sum_{x_{j}\leq n\leq y_{j}}e\left(
g\left(  n\right)  \right)  \right\vert \\
&  +\sum_{j=1}^{k-1}\left\vert \sum_{y_{j}<n<x_{j+1}}e\left(  g\left(
n\right)  \right)  \right\vert +\left\vert \sum_{y_{k}<n\leq L-1-m}e\left(
g\left(  n\right)  \right)  \right\vert \\
&  \leq p\left(  \varepsilon_{1}\right)  +k\left(  1+\frac{2W}{m}%
\Delta\right)  +\left(  k-1\right)  \frac{2}{\pi\Delta}+p\left(
1-\varepsilon_{2}\right)  .
\end{split}
\label{Part1SumPart1}%
\end{equation}
Next, inserting (\ref{Estimatek}), we obtain%
\begin{equation}%
\begin{split}
&  \left\vert \sum_{n=0}^{L-1-m}e\left(  g\left(  n\right)  \right)
\right\vert \\
&  \leq k\left(  1+\frac{2W}{m}\Delta+\frac{2}{\pi\Delta}\right)  +p\left(
\varepsilon_{1}\right)  +p\left(  1-\varepsilon_{2}\right)  -\frac{2}%
{\pi\Delta}\\
&  \leq\left(  \frac{mK}{W}\mu+\varepsilon_{1}-\varepsilon_{2}\right)  \left(
1+\frac{2W\Delta}{m}+\frac{2}{\pi\Delta}\right)  +p\left(  \varepsilon
_{1}\right)  +p\left(  1-\varepsilon_{2}\right)  -\frac{2}{\pi\Delta}.
\end{split}
\label{Part1SumPart2}%
\end{equation}
Finally, by applying some simple operations to the last expressions and
inserting the definition of $P\left(  \varepsilon\right)  $ from Lemma
\ref{SecondderivativePart1}(b), we arrive at
\begin{align*}
&  \left\vert \sum_{n=0}^{L-1-m}e\left(  g\left(  n\right)  \right)
\right\vert \\
&  \leq\left(  \frac{mK}{W}\mu-1\right)  \left(  1+\frac{2W\Delta}{m}+\frac
{2}{\pi\Delta}\right)  +P\left(  \varepsilon_{1}\right)  +P\left(
1-\varepsilon_{2}\right)  -\frac{2}{\pi\Delta}.
\end{align*}

\subsubsection{Proof for the case $k =1$}

\noindent Here, we define the following segmentation for the interval $\left[
0,L-1-m\right]  $:
\begin{align*}
x_{1}  &  =\max\left\{  \left(  g^{\prime}\right)  ^{-1}\left(  C_{1}%
-\Delta\right)  ,0\right\}  ,\\
x_{2}  &  =\left(  g^{\prime}\right)  ^{-1}\left(  C_{2}-\Delta\right)  ,\\
y_{0}  &  =\left(  g^{\prime}\right)  ^{-1}\left(  C_{0}+\Delta\right)  ,\\
y_{1}  &  =\min\left\{  \left(  g^{\prime}\right)  ^{-1}\left(  C_{1}%
+\Delta\right)  ,L-1-m\right\}  .
\end{align*}
The setting deviates only in the definition of $y_{1}$ compared to the
previous subsection. Then applying similar standard arguments as before, we
arrive at the following segmentation
\[%
\begin{split}
0  &  \leq x_{1}<y_{1}\leq L-1-m\\
y_{0}  &  <x_{1}\text{ and }y_{1}<x_{2}.
\end{split}
\]
Consequently, we select the subintervals of $\left[  0,L-1-m\right]  $ as
$\left[  0,x_{1}\right)  $, $\left[  x_{1},y_{1}\right]  $ and $\left(
y_{1},L-1-m\right]  $ using the trivial bound for $\left[  x_{1},y_{1}\right]
$ and the subintervals $\left[  0,x_{1}\right)  $ and $\left(  y_{1}%
,L-1-m\right]  $ to be again treated separately. All subintervals are to be
treated analogously as in the previous subsection and we omit the proof
details. We arrive at the same corresponding estimate as in the previous
subsection by%

\begin{align*}
&  \left\vert \sum_{n=0}^{L-1-m}e\left(  g\left(  n\right)  \right)
\right\vert \\
&  \leq\left\vert \sum_{0\leq n<x_{1}}e\left(  g\left(  n\right)  \right)
\right\vert +\left\vert \sum_{x_{1}\leq n\leq y_{1}}e\left(  g\left(
n\right)  \right)  \right\vert +\left\vert \sum_{y_{1}<n\leq L-1-m}e\left(
g\left(  n\right)  \right)  \right\vert \\
&  \leq p\left(  \varepsilon_{1}\right)  +\left(  1+\frac{2W}{m}\Delta\right)
+p\left(  1-\varepsilon_{2}\right) \\
&  \leq p\left(  \varepsilon_{1}\right)  +k\left(  1+\frac{2W}{m}%
\Delta\right)  +\left(  k-1\right)  \frac{2}{\pi\Delta}+p\left(
1-\varepsilon_{2}\right) \\
&  =k\left(  1+\frac{2W}{m}\Delta+\frac{2}{\pi\Delta}\right)  +p\left(
\varepsilon_{1}\right)  +p\left(  1-\varepsilon_{2}\right)  -\frac{2}%
{\pi\Delta}\\
&  \leq\left(  \frac{mK}{W}\mu-1\right)  \left(  1+\frac{2W\Delta}{m}+\frac
{2}{\pi\Delta}\right)  +P\left(  \varepsilon_{1}\right)  +P\left(
1-\varepsilon_{2}\right)  -\frac{2}{\pi\Delta}.
\end{align*}

\subsubsection{Proof for the case $k =0$}

\noindent As it turns out, this case is surprisingly more complicated than
expected. One problem here is the proper definition of the segmentation points where the setting from the previous cases $k=1$ and $k \geq 2$ does not apply. 
Furthermore, in order to obtain a generally valid inequality, we must estimate more precisely here. First we collect%
\begin{align*}
0  &  =k=\left\lfloor g^{\prime}\left(  L-1-m\right)  \right\rfloor
-\left\lfloor g^{\prime}\left(  0\right)  \right\rfloor ,\\
C_{0}  &  =\left\lfloor g^{\prime}\left(  0\right)  \right\rfloor
=\left\lfloor g^{\prime}\left(  L-1-m\right)  \right\rfloor ,\\
C_{1}  &  =C_{0}+1,\\
\varepsilon_{2}  &  =g^{\prime}\left(  L-1-m\right)  -\left\lfloor g^{\prime
}\left(  L-1-m\right)  \right\rfloor \\
&  =g^{\prime}\left(  L-1-m\right)  -\left\lfloor g^{\prime}\left(  0\right)
\right\rfloor \\
&  \geq g^{\prime}\left(  0\right)  -\left\lfloor g^{\prime}\left(  0\right)
\right\rfloor \\
&  =\varepsilon_{1}.
\end{align*}
Therefore, we must address the situation $0\leq\varepsilon_{1}\leq
\varepsilon_{2}\leq1$. We consider the following six cases.

\noindent Case 1: $\varepsilon_{1}\in\left[  0,\Delta\right)  $ and
$\varepsilon_{2}\in\left[  0,\Delta\right)  $.

\noindent We select $x_{1}=0,y_{1}=L-1-m.$ Thus the segmentation of $\left[
0,L-1-m\right]  $ results itself in $\left[  x_{1},y_{1}\right]  .$ Then%
\begin{align*}
y_{1}-x_{1}  &  =L-1-m-0\\
&  =\left(  g^{\prime}\right)  ^{-1}\left(  g^{\prime}\left(  L-1-m\right)
\right)  -\left(  g^{\prime}\right)  ^{-1}\left(  g^{\prime}\left(  0\right)
\right) \\
&  =\left(  \left(  g^{\prime}\right)  ^{-1}\right)  ^{\prime}\left(
\upsilon_{1}\right)  \left(  g^{\prime}\left(  L-1-m\right)  -g^{\prime
}\left(  0\right)  \right)
\end{align*}
for some $g^{\prime}\left(  0\right)  <\upsilon_{1}<g^{\prime}\left(
L-1-m\right)  $. Upon setting $\delta_{1}=\left(  g^{\prime}\right)
^{-1}\left(  \upsilon_{1}\right)  $ this implies $0<\delta_{1}<L-1-m$ and
together with Lemma \ref{SecondderivativePart1}(a) we obtain
\begin{align*}
y_{1}-x_{1}  &  =\left(  \left(  g^{\prime}\right)  ^{-1}\right)  ^{\prime
}\left(  \upsilon_{1}\right)  \left(  \left\lfloor g^{\prime}\left(
L-1-m\right)  \right\rfloor +\varepsilon_{2}-\left\lfloor g^{\prime}\left(
0\right)  \right\rfloor -\varepsilon_{1}\right) \\
&  =\left(  \left(  g^{\prime}\right)  ^{-1}\right)  ^{\prime}\left(
\upsilon_{1}\right)  \left(  \varepsilon_{2}-\varepsilon_{1}\right) \\
&  \leq\left(  \left(  g^{\prime}\right)  ^{-1}\right)  ^{\prime}\left(
\upsilon_{1}\right)  \left(  \Delta-\varepsilon_{1}\right) \\
&  =\frac{\Delta-\varepsilon_{1}}{g^{\prime\prime}\left(  \delta_{1}\right)
}\\
&  \leq\frac{W}{m}\left(  \Delta-\varepsilon_{1}\right)  .
\end{align*}
Estimating your final sum now gives%
\begin{align*}
\left\vert \sum_{n=0}^{L-1-m}e\left(  g\left(  n\right)  \right)  \right\vert
&  \leq\sum_{x_{1}\leq n\leq y_{1}}^{{}}\left\vert e\left(  g\left(  n\right)
\right)  \right\vert =1+y_{1}-x_{1}\\
&  \leq1+\frac{W}{m}\left(  \Delta-\varepsilon_{1}\right)  .
\end{align*}
Recalling the definition for $p\left(  \varepsilon\right)  $ in Lemma
\ref{SecondderivativePart1} together with $1-\varepsilon_{2}\in\left(
1-\Delta,1\right]  $ we obtain%
\begin{align*}
p\left(  \varepsilon_{1}\right)   &  =1+\frac{2}{\pi\Delta}+\frac{W}{m}\left(
\Delta-\varepsilon_{1}\right)  ,\\
p\left(  1-\varepsilon_{2}\right)   &  =0.
\end{align*}
From this we finally get%
\begin{align*}
\left\vert \sum_{n=0}^{L-1-m}e\left(  g\left(  n\right)  \right)  \right\vert
&  \leq1+\frac{W}{m}\left(  \Delta-\varepsilon_{1}\right) \\
&  =p\left(  \varepsilon_{1}\right)  +p\left(  1-\varepsilon_{2}\right)
-\frac{2}{\pi\Delta}.
\end{align*}

\noindent Case 2: $\varepsilon_{1}\in\left[  0,\Delta\right)  $ and
$\varepsilon_{2}\in\left[  \Delta,1-\Delta\right]  $.

\noindent We select $y_{0}=\left(  g^{^{\prime}}\right)  ^{-1}\left(
C_{0}+\Delta\right)  $, $x_{1}=L-1-m$. One easily checks that $0<y_{0}\leq
x_{1}=L-1-m$. Thus the segmentation of $\left[  0,L-1-m\right]  $ results
itself in $\left[  0,y_{0}\right]  ,\left(  y_{0},x_{1}\right]  $ using the
trivial bound for $\left[  0,y_{0}\right]  $ and the Kusmin-Landau bound for
$\left(  y_{0},x_{1}\right]  $. Following similar proof steps as before and
using Lemma \ref{KusminLandau} together with values $z=C_{0}$, $U=\Delta$ and
$V=\varepsilon_{2}$ we estimate your final sum by%
\begin{align*}
\left\vert \sum_{n=0}^{L-1-m}e\left(  g\left(  n\right)  \right)  \right\vert
&  \leq\left\vert \sum_{0\leq n\leq y_{0}}^{{}}e\left(  g\left(  n\right)
\right)  \right\vert +\left\vert \sum_{y_{0}<n\leq x_{1}}^{{}}e\left(
g\left(  n\right)  \right)  \right\vert \\
&  \leq1+\frac{W}{m}\left(  \Delta-\varepsilon_{1}\right)  +\frac{1}{\pi
}\left(  \frac{1}{\Delta}+\frac{1}{1-\varepsilon_{2}}\right)  .
\end{align*}
Together with $1-\varepsilon_{2}\in\left[  \Delta,1-\Delta\right]  $ we obtain%
\begin{align*}
p\left(  \varepsilon_{1}\right)   &  =1+\frac{2}{\pi\Delta}+\frac{W}{m}\left(
\Delta-\varepsilon_{1}\right)  ,\\
p\left(  1-\varepsilon_{2}\right)   &  =\frac{1}{\pi}\left(  \frac{1}{\Delta
}+\frac{1}{1-\varepsilon_{2}}\right)  .
\end{align*}
From this we finally get%
\begin{align*}
\left\vert \sum_{n=0}^{L-1-m}e\left(  g\left(  n\right)  \right)  \right\vert
&  \leq1+\frac{W}{m}\left(  \Delta-\varepsilon_{1}\right)  +\frac{1}{\pi
}\left(  \frac{1}{\Delta}+\frac{1}{1-\varepsilon_{2}}\right) \\
&  =p\left(  \varepsilon_{1}\right)  +p\left(  1-\varepsilon_{2}\right)
-\frac{2}{\pi\Delta}.
\end{align*}

\noindent Case 3: $\varepsilon_{1}\in\left[  0,\Delta\right)  $ and
$\varepsilon_{2}\in\left(  1-\Delta,1\right]  $.

\noindent We select $y_{0}=\left(  g^{^{\prime}}\right)  ^{-1}\left(
C_{0}+\Delta\right)  $, $x_{1}=\left(  g^{^{\prime}}\right)  ^{-1}\left(
C_{1}-\Delta\right)  $ and $y_{1}=L-1-m$. Again one easily checks that
$0<y_{0}<x_{1}<y_{1}=L-1-m$. Thus the segmentation of $\left[  0,L-1-m\right]
$ results itself in $\left[  0,y_{0}\right]  ,\left(  y_{0},x_{1}\right)  $
and $\left[  x_{1},y_{1}\right]  $ using the trivial bound for $\left[
0,y_{0}\right]  $, $\left[  x_{1},y_{1}\right]  $ and the Kusmin-Landau bound
for $\left(  y_{0},x_{1}\right)  $. The final estimate then follows the lines
before and we arrive at%
\begin{align*}
\left\vert \sum_{n=0}^{L-1-m}e\left(  g\left(  n\right)  \right)  \right\vert
&  \leq\left\vert \sum_{0\leq n\leq y_{0}}^{{}}e\left(  g\left(  n\right)
\right)  \right\vert +\left\vert \sum_{y_{0}<n<x_{1}}^{{}}e\left(  g\left(
n\right)  \right)  \right\vert +\left\vert \sum_{x_{1}\leq n\leq y_{1}}^{{}%
}e\left(  g\left(  n\right)  \right)  \right\vert \\
&  \leq1+\frac{W}{m}\left(  \Delta-\varepsilon_{1}\right)  +\frac{2}{\pi
\Delta}+1+\frac{W}{m}\left(  \Delta-\left(  1-\varepsilon_{2}\right)  \right)
.
\end{align*}
Together with $1-\varepsilon_{2}\in\left[  0,\Delta\right)  $ we obtain%
\begin{align*}
p\left(  \varepsilon_{1}\right)   &  =1+\frac{2}{\pi\Delta}+\frac{W}{m}\left(
\Delta-\varepsilon_{1}\right)  ,\\
p\left(  1-\varepsilon_{2}\right)   &  =1+\frac{2}{\pi\Delta}+\frac{W}%
{m}\left(  \Delta-\left(  1-\varepsilon_{2}\right)  \right)  .
\end{align*}
From this we finally get%
\begin{align*}
\left\vert \sum_{n=0}^{L-1-m}e\left(  g\left(  n\right)  \right)  \right\vert
&  \leq1+\frac{W}{m}\left(  \Delta-\varepsilon_{1}\right)  +\frac{2}{\pi
\Delta}+1+\frac{W}{m}\left(  \Delta-\left(  1-\varepsilon_{2}\right)  \right)
\\
&  =p\left(  \varepsilon_{1}\right)  +p\left(  1-\varepsilon_{2}\right)
-\frac{2}{\pi\Delta}.
\end{align*}

\noindent Case 4: $\varepsilon_{1}\in\left[  \Delta,1-\Delta\right]  $ and
$\varepsilon_{2}\in\left[  \Delta,1-\Delta\right]  $.

\noindent We select $y_{0}=0$, $x_{1}=L-1-m$. The segmentation of $\left[
0,L-1-m\right]  $ results itself in $\left[  y_{0},x_{1}\right]  $. Here we
use Lemma \ref{KusminLandau} together with values $z=C_{0}$, $U=\varepsilon
_{1}$ and $V=\varepsilon_{2}$. We obtain%
\[
\left\vert \sum_{n=0}^{L-1-m}e\left(  g\left(  n\right)  \right)  \right\vert
\leq\frac{1}{\pi}\left(  \frac{1}{\varepsilon_{1}}+\frac{1}{1-\varepsilon_{2}%
}\right)  .
\]
Together with $1-\varepsilon_{2}\in\left[  \Delta,1-\Delta\right]  $ we obtain%
\begin{align*}
p\left(  \varepsilon_{1}\right)   &  =\frac{1}{\pi}\left(  \frac{1}{\Delta
}+\frac{1}{\varepsilon_{1}}\right) \\
p\left(  1-\varepsilon_{2}\right)   &  =\frac{1}{\pi}\left(  \frac{1}{\Delta
}+\frac{1}{1-\varepsilon_{2}}\right)  .
\end{align*}
From this we finally get%
\begin{align*}
\left\vert \sum_{n=0}^{L-1-m}e\left(  g\left(  n\right)  \right)  \right\vert
&  \leq\frac{1}{\pi}\left(  \frac{1}{\varepsilon_{1}}+\frac{1}{1-\varepsilon
_{2}}\right) \\
&  =p\left(  \varepsilon_{1}\right)  +p\left(  1-\varepsilon_{2}\right)
-\frac{2}{\pi\Delta}.
\end{align*}

\noindent Case 5: $\varepsilon_{1}\in\left[  \Delta,1-\Delta\right]  $ and
$\varepsilon_{2}\in\left(  1-\Delta,1\right]  $.

\noindent We select $y_{0}=0$, $x_{1}=\left(  g^{^{\prime}}\right)
^{-1}\left(  C_{1}-\Delta\right)  $ and $y_{1}=L-1-m$. One checks that
$0=y_{0}\leq x_{1}\leq y_{1}=L-1-m$. Thus the segmentation of $\left[
0,L-1-m\right]  $ results itself in $\left[  y_{0},x_{1}\right)  $ and
$\left[  x_{1},y_{1}\right]  $. Again, we use the Kusmin-Landau bound for
$\left[  y_{0},x_{1}\right)  $ and the trivial bound for the remaining
interval $\left[  x_{1},y_{1}\right]  $. Analogue computations as before then
lead us to%
\begin{align*}
\left\vert \sum_{n=0}^{L-1-m}e\left(  g\left(  n\right)  \right)  \right\vert
&  \leq\left\vert \sum_{y_{0}\leq n<x_{1}}^{{}}e\left(  g\left(  n\right)
\right)  \right\vert +\left\vert \sum_{x_{1}\leq n\leq y_{1}}^{{}}e\left(
g\left(  n\right)  \right)  \right\vert \\
&  \leq\frac{1}{\pi}\left(  \frac{1}{\Delta}+\frac{1}{\varepsilon_{1}}\right)
+1+\frac{W}{m}\left(  \Delta-\left(  1-\varepsilon_{2}\right)  \right)  .
\end{align*}
Together with $1-\varepsilon_{2}\in\left[  0,\Delta\right)  $ we obtain%
\begin{align*}
p\left(  \varepsilon_{1}\right)   &  =\frac{1}{\pi}\left(  \frac{1}{\Delta
}+\frac{1}{\varepsilon_{1}}\right) \\
p\left(  1-\varepsilon_{2}\right)   &  =1+\frac{2}{\pi\Delta}+\frac{W}%
{m}\left(  \Delta-\left(  1-\varepsilon_{2}\right)  \right)  .
\end{align*}
From this we finally get%
\begin{align*}
\left\vert \sum_{n=0}^{L-1-m}e\left(  g\left(  n\right)  \right)  \right\vert
&  \leq\frac{1}{\pi}\left(  \frac{1}{\Delta}+\frac{1}{\varepsilon_{1}}\right)
+1+\frac{W}{m}\left(  \varepsilon_{2}-\left(  1-\Delta\right)  \right) \\
&  =p\left(  \varepsilon_{1}\right)  +p\left(  1-\varepsilon_{2}\right)
-\frac{2}{\pi\Delta}.
\end{align*}

\noindent Case 6: $\varepsilon_{1}\in\left(  1-\Delta,1\right]  $ and
$\varepsilon_{2}\in\left(  1-\Delta,1\right]  $.

\noindent This case runs analogue to the first case. We select $x_{1}=0$,
$y_{1}=L-1-m$. The final procedure then results in%
\begin{align*}
\left\vert \sum_{n=0}^{L-1-m}e\left(  g\left(  n\right)  \right)  \right\vert
&  \leq\sum_{x_{1}\leq n\leq y_{1}}^{{}}\left\vert e\left(  g\left(  n\right)
\right)  \right\vert =1+y_{1}-x_{1}\\
&  \leq1+\frac{W}{m}\left(  \Delta-\left(  1-\varepsilon_{2}\right)  \right)
.
\end{align*}
Together with $1-\varepsilon_{2}\in\left[  0,\Delta\right)  $ we obtain%
\begin{align*}
p\left(  \varepsilon_{1}\right)   &  =0\\
p\left(  1-\varepsilon_{2}\right)   &  =1+\frac{2}{\pi\Delta}+\frac{W}%
{m}\left(  \Delta-\left(  1-\varepsilon_{2}\right)  \right)  .
\end{align*}
From this we finally get%
\begin{align*}
\left\vert \sum_{n=0}^{L-1-m}e\left(  g\left(  n\right)  \right)  \right\vert
&  \leq1+\frac{W}{m}\left(  \Delta-\left(  1-\varepsilon_{2}\right)  \right)
\\
&  =p\left(  \varepsilon_{1}\right)  +p\left(  1-\varepsilon_{2}\right)
-\frac{2}{\pi\Delta}.
\end{align*}

\noindent Now, collecting all intermediate results, we finally arrive in all
six cases at the same corresponding estimate as in the previous subsections by%

\begin{align*}
&  \left\vert \sum_{n=0}^{L-1-m}e\left(  g\left(  n\right)  \right)
\right\vert \\
&  \leq p\left(  \varepsilon_{1}\right)  +p\left(  1-\varepsilon_{2}\right)
-\frac{2}{\pi\Delta}\\
&  =k\left(  1+\frac{2W}{m}\Delta+\frac{2}{\pi\Delta}\right)  +p\left(
\varepsilon_{1}\right)  +p\left(  1-\varepsilon_{2}\right)  -\frac{2}%
{\pi\Delta}\\
&  \leq\left(  \frac{mK}{W}\mu-1\right)  \left(  1+\frac{2W\Delta}{m}+\frac
{2}{\pi\Delta}\right)  +P\left(  \varepsilon_{1}\right)  +P\left(
1-\varepsilon_{2}\right)  -\frac{2}{\pi\Delta},
\end{align*}
which in turn represents the final result in Lemma \ref{SecondderivativePart1}%
(b). Finally, we summarize the same bound in all cases $k=0,1$ and $k\geq2$
and the main result in Lemma \ref{SecondderivativePart1}(b) is established.

\subsection{Proof for Lemma \ref{SecondderivativePart2}}

\noindent We split the proof in five different parts.

\noindent(a) First, we show that $R\geq4$. For our values $t\geq
t_{0}=5\cdot10^{6}$ together with $\phi\in\left[  1/3, 0.35\right]  $ some
standard calculations show that by an monotonicity argument we obtain%
\begin{align*}
R  &  =\left\lfloor \frac{n_{1}}{K}\right\rfloor >\frac{\frac{1}{\sqrt{2\pi}%
}\sqrt{t}-1}{t^{\phi}+1}-1\\
&  \geq\frac{\frac{1}{\sqrt{2\pi}}\sqrt{t_{0}}-1}{t_{0}^{\phi}+1}%
-1=\frac{\sqrt{\frac{t_{0}}{\sqrt{2\pi}}}-1}{t_{0}^{\phi}+1}-1.
\end{align*}
By definition $R$ is a positive integer and thus%
\[
R\geq\left\lceil \frac{\sqrt{\frac{t_{0}}{\sqrt{2\pi}}}-1}{t_{0}^{\phi}%
+1}-1\right\rceil =:R_{0}.
\]
Now, a direct calculation leads to
\[
R_{0}\geq\left\lceil \frac{\sqrt{\frac{t_{0}}{\sqrt{2\pi}}}-1}{t_{0}^{0.35}%
+1}-1\right\rceil =\lceil 3.0115\ldots \rceil = 4.
\]
\noindent(b) Next, we show that $\min\left\{  M,K-1\right\}  =M$.

\noindent Applying some standard calculations we easily arrive at
\[
3\pi^{\frac{1}{3}}\left(  \frac{1}{\sqrt{2\pi}}t^{\frac{1}{6}}+1\right)
+2\leq t^{\frac{1}{3}}
\]
which holds in our range for $t$. In fact, the inequality holds for all $t
\geq2012$. Next, using $\phi\geq1/3$, we estimate%
\[
R=\left\lfloor \frac{n_{1}}{K}\right\rfloor \leq\frac{1}{\sqrt{2\pi}}%
t^{\frac{1}{2}-\phi}\leq\frac{1}{\sqrt{2\pi}}t^{\frac{1}{6}}.
\]
Using $0<c<3$ and $r\leq R$ we put together and obtain%
\begin{align*}
1  &  \geq\frac{1}{t^{\frac{1}{3}}}\left(  3\pi^{\frac{1}{3}}\left(  \frac
{1}{\sqrt{2\pi}}t^{\frac{1}{6}}+1\right)  +2\right) \\
&  \geq\frac{1}{t^{\frac{1}{3}}}\left(  c\pi^{\frac{1}{3}}\left(  R+1\right)
+2\right) \\
&  \geq c\frac{\pi^{\frac{1}{3}}\left(  r+1\right)  }{t^{\frac{1}{3}}}%
+\frac{2}{\left\lceil t^{\phi}\right\rceil }\\
&  =\frac{1}{K}\left(  cW^{\frac{1}{3}}+2\right)  .
\end{align*}
From this we obtain $K\geq cW^{1/3}+2>\left\lceil cW^{1/3}\right\rceil +1=M+1$
which shows that $K-1 \geq M$.

\noindent(c) Next, we select $\Delta=\sqrt{m/\pi W}$ and we show that
$0<\Delta< 1/6$. Together with $4\leq r$ and $\phi\geq1/3$ we have%
\[
\frac{1}{W}=\frac{t}{\pi\left(  r+1\right)  ^{3}K^{3}}\leq\frac{t}{125\pi
}\frac{1}{\left\lceil t^{\phi}\right\rceil ^{3}}\leq\frac{1}{125\pi}\frac
{1}{t^{3\phi-1}}\leq\frac{1}{125\pi}.
\]
Now, using $m\leq M,1/W \leq1/125\pi$ and $0<c<3$, we estimate the value
$\Delta$ by
\begin{align*}
\Delta &  =\sqrt{\frac{m}{\pi W}}\leq\sqrt{\frac{\left\lceil cW^{\frac{1}{3}%
}\right\rceil }{\pi W}}<\sqrt{\frac{cW^{\frac{1}{3}}+1}{\pi W}}\\
&  \leq\pi^{-\frac{1}{2}}\pi^{-\frac{1}{3}}\frac{1}{5}\sqrt{c+1}<\frac{2}%
{5\pi\frac{5}{6}}=0.154088<\frac{1}{6}.
\end{align*}
(d) Now, by considering three cases, will show that%
\[
P\left(  \varepsilon\right)  \leq1+\frac{5}{\pi\Delta}-\frac{3}{\pi
},\varepsilon\in\left[  0,1\right]  .
\]
Case 1: $\varepsilon\in\left[  0,\Delta\right)  $. Recalling the definition
for $P\left(  \varepsilon\right)  $ together with $\Delta W/m = 1/\pi\Delta$
for our choice of $\Delta$, we find that%
\begin{align*}
P\left(  \varepsilon\right)   &  =\left(  1+\frac{4}{\pi\Delta}\right)
\varepsilon+1+\frac{2}{\pi\Delta}+\frac{1}{\pi\Delta}-\frac{\varepsilon}%
{\pi\Delta^{2}}\\
&  =1+\frac{3}{\pi\Delta}+\frac{\varepsilon}{\pi}\left(  -\frac{1}{\Delta^{2}%
}+\frac{4}{\Delta}+\pi\right)  .
\end{align*}
Then the function $f\left(  x\right)  =-1/x^{2}+4/x+\pi$ is increasing in
$\left(  0, 1/6\right)  $ and thus we get%
\begin{align*}
P\left(  \varepsilon\right)   &  \leq1+\frac{3}{\pi\Delta}+\frac{\varepsilon
}{\pi}f\left(  \frac{1}{6}\right) \\
&  =1+\frac{3}{\pi\Delta}+\frac{\varepsilon}{\pi}\left(  \pi-12\right) \\
&  \leq1+\frac{3}{\pi\Delta}.
\end{align*}
We collect%
\begin{equation}
P\left(  \varepsilon\right)  \leq1+\frac{3}{\pi\Delta},\varepsilon\in\left[
0,\Delta\right)  . \label{P_Case1}%
\end{equation}
Case 2: $\varepsilon\in\left[  \Delta,1-\Delta\right]  .$ Here we have
\[
P\left(  \varepsilon\right)  =\frac{1}{\pi\Delta}+\left(  1+\frac{4}{\pi
\Delta}\right)  \varepsilon+\frac{1}{\pi\varepsilon}.
\]
Considering the function $f\left(  x\right)  =\left(  1+4/\pi\Delta\right)
x+1/\pi x$ one easily checks that $f$ is convex, hence%
\[
P\left(  \varepsilon\right)  =\frac{1}{\pi\Delta}+f\left(  \varepsilon\right)
\leq\frac{1}{\pi\Delta}+\max\left\{  f\left(  \Delta\right)  ,f\left(
1-\Delta\right)  \right\}  .
\]
Some standard operations now show that $f\left(  1-\Delta\right)  >f\left(
\Delta\right)  $, and we estimate
\[
P\left(  \varepsilon\right)  \leq1+\frac{5}{\pi\Delta}-\Delta-\frac{4}{\pi
}+\frac{1}{\pi\left(  1-\Delta\right)  }.
\]
Similarly as before we consider the function $f\left(  x\right)  =1/\pi\left(
1-x\right)  -x$ and we again find that $f$ is convex in $\left(  0,
1/6\right)  $ thus%
\begin{align*}
f\left(  x\right)   &  \leq\max\left\{  f\left(  0\right)  ,f\left(  \frac
{1}{6}\right)  \right\}  =\max\left\{  \frac{1}{\pi},\frac{1}{\pi\left(
1-\frac{1}{6}\right)  }-\frac{1}{6}\right\} \\
&  =\max\left\{  \frac{1}{\pi},\frac{6}{5\pi}-\frac{1}{6}\right\}  =\frac
{1}{\pi}.
\end{align*}
Thus we find $P\left(  \varepsilon\right)  \leq1+5/\pi\Delta-4/\pi+f\left(
\Delta\right)  \leq1+5/\pi\Delta-4/\pi+1/\pi$. We collect%
\begin{equation}
P\left(  \varepsilon\right)  \leq1+\frac{5}{\pi\Delta}-\frac{3}{\pi
},\varepsilon\in\left[  \Delta,1-\Delta\right]  . \label{P_Case2}%
\end{equation}
Case 3: $\varepsilon\in\left(  1-\Delta,1\right]  .$ In this situation we
simply find the estimate%
\begin{equation}
P\left(  \varepsilon\right)  \leq1+\frac{4}{\pi\Delta},\varepsilon\in\left(
1-\Delta,1\right]  . \label{P_Case3}%
\end{equation}
Putting together (\ref{P_Case1}), (\ref{P_Case2}) and (\ref{P_Case3}) we
arrive at our final estimate
\begin{equation}
P\left(  \varepsilon\right)  \leq1+\frac{5}{\pi\Delta}-\frac{3}{\pi
},\varepsilon\in\left[  0,1\right]  . \label{P_Estimate}%
\end{equation}
(e) Finally, let $L\leq K,1\leq m\leq\min\left(  M,K-1\right)  =M$ together
with our $\Delta=\sqrt{m/\pi W}$.

\noindent Case 1: $1\leq m<L$. Here, we can apply Lemma
\ref{SecondderivativePart1}(b) together with (\ref{P_Estimate}). Recall that
for our choice for $\Delta$ we have $\Delta W/m = 1/\pi\Delta$. Thus we
estimate%
\begin{align*}
\left\vert \sum_{n=0}^{L-1-m}e\left(  g\left(  n\right)  \right)  \right\vert
&  \leq\left(  \frac{mK}{W}\mu-1\right)  \left(  1+\frac{4}{\pi\Delta}\right)
+2\left(  1+\frac{5}{\pi\Delta}-\frac{3}{\pi}\right)-\frac{2}{\pi\Delta} \\
&  =\frac{mK}{W}\mu\left(  1+\frac{4}{\pi\Delta}\right)  +2+\frac{10}%
{\pi\Delta}-\frac{6}{\pi}-1-\frac{4}{\pi\Delta}-\frac{2}{\pi\Delta}\\
&  =\frac{mK}{W}\mu\frac{4\Delta W}{m}+\frac{mK}{W}\mu+\frac{4\Delta W}%
{m}+1-\frac{6}{\pi}\\
&  =\frac{4\mu K}{\sqrt{\pi W}}m^{\frac{1}{2}}+\frac{\mu K}{W}m+4\sqrt
{\frac{W}{\pi}}m^{-\frac{1}{2}}+1-\frac{6}{\pi}.
\end{align*}
Case 2: $m\geq L$. In this case the left-hand sum is an empty sum with value
$0$. Together with $0<\Delta< 1/6$ this sum is correctly estimated by the
right-hand side, since we have%
\begin{align*}
&  \frac{4\mu K}{\sqrt{\pi W}}m^{\frac{1}{2}}+\frac{\mu K}{W}m+4\sqrt{\frac
{W}{\pi}}m^{-\frac{1}{2}}+1-\frac{6}{\pi}\\
&  \geq0+0+4\sqrt{\frac{W}{\pi}}m^{-\frac{1}{2}}+1-\frac{6}{\pi}\\
&  =\frac{4}{\pi}\sqrt{\frac{W\pi}{m}}+1-\frac{6}{\pi}\\
&  =\frac{4}{\pi}\frac{1}{\Delta}+1-\frac{6}{\pi} > \frac{24}{\pi} + 1-\frac{6}{\pi} > 0.
\end{align*}

\section{Concluding remarks}

We believe that your analysis in the proof steps for Theorem \ref{theorem1} is
more or less optimized. Some further improvements on the constant $0.611$ lead
us to the limitations of the Riemann-Siegel bound together with the transition
into the intermediate bottleneck region. We believe that we have reached the
front end, where there is little additional space for further improvements. 
This follows from the observation that progress for the individual intermediate bottleneck regions is disproportionately small when the $t_{0}$ values are tightened. Therefore, hundreds of bottleneck regions would have to be treated numerically. That would involve a tremendous amount of additional computational work.

\medskip\noindent We will discuss three theoretical approaches that may or may not result in marginal or modest improvements.

\medskip\noindent First, the bound $0 < \Delta< 1/6$ can be slightly improved.
This follows from the proof for Lemma \ref{SecondderivativePart2}, part (c).
Here, we can improve to $\Delta< 2/(5 \pi^{5/6})=0.154088\ldots$. 
Unfortunately, this does not lead to an improvement in the estimate (\ref{P_Estimate}) and consequently does not lead to an improvement for the constant term 
in Lemma \ref{SecondderivativePart2}.

\noindent Second, our estimate for the $T_{3}$ sum in subsection
\ref{Bottleneckregion} may be inefficient. This last piece contains
$n_{1}-KR+1$ terms, yet we have bounded it as though it contains $K$ terms. In
situations where $K$ is much bigger than $n_{1}-KR+1$ this approach appears
wasteful. However, it is unclear how this could be managed in a better way.

\noindent Third, the summation process for the $T_{2}$ and $T_{3}$ sums in
(\ref{MainSumBrake}), (\ref{T2Estimate}) and (\ref{T2T3Estimate}) via partial
summation together with Abel's inequality is probably inefficient to some
extent. Here the denominators for the individual segments are always estimated
by their smallest segment value. Though this process leads directly to an
explicit tractable exponential sum, it is unclear whether standard methods can
improve it.

\medskip\noindent The overall fundamental problem is that the exponential sums are estimated inefficiently, 
meaning that too many cancellations are overlooked. However, we are unaware of a mechanism that would enable us to exploit these cancellation terms more effectively.

\medskip\noindent Michael Revers \newline Department of Mathematics \newline
University Salzburg\newline Hellbrunnerstrasse 34 \newline5020 Salzburg,
AUSTRIA \newline e-mail: michael.revers@plus.ac.at \newline16-digit ORCID
identifier: 0000-0002-9668-5133


\begin{thebibliography}{99}                                                                                               %


\bibitem {Bourgain}J. Bourgain, Decoupling, exponential sums and the Riemann
zeta function, J. Am. Math. Soc. 30 (1) (2016) 205-224.

\bibitem {ChengGraham}Y.F. Cheng, S.W. Graham, Explicit estimates for the
Riemann zeta function, Rocky Mountain L. Math 34 (4) (2004) 1261-1280.

\bibitem {Ford}K. Ford, \textit{Zero-free regions for the Riemann zeta
function}, in: Number theory for the Millenium, II, Urbana, IL, 2000, A K
Peters, Natick, MA, 2002, pp. 25-56

\bibitem {Gabke}W. Gabke, Neue Herleitung und explizite Restabschätzung der
Riemann Siegel Formel (Ph.D. thesis), Göttingen, 1979.

\bibitem {GrahamKolesnik}S.W. Graham, G. Kolesnik, \textit{van der Corput's
Method of Exponential Sums}, in: London Mathematical Society Lecture Note
Series, Vol. 126, Cambridge University Press, Cambridge 1991.

\bibitem {Hasanalizade}E. Hasanalizade, Q. Shen, P.J. Wong, Counting zeros of
the Riemann zeta function, J. Number Theory 235 (2021) 219-241.

\bibitem {Hiary1}G.A. Hiary, An explicit van der Corput estimate for
$\xi\left(  1/2+it\right)  $, Indag. Math. 27 (2) (2016) 524-533.

\bibitem {Hiary2}G.A. Hiary, D. Patel, A. Yang, An improved esplicit estimate
for $\xi\left(  1/2+it\right)$, J. Number Theory 256 (2024) 195-217.

\bibitem {Huxley}M.N. Huxley, Exponential sums and the Riemann zeta function,
V, Proc. Lond. Math. Soc. (3) 90 (1) (2005) 1-41.

\bibitem {Kadiri}H. Kadiri, A. Lumley, N. Ng, Explicit zero density for the
Riemann zeta function, J. Math. Anal. Appl. 465 (1) (2018) 22-46.

\bibitem {Kusmin}R. Kusmin, Sur quelques inégalités trigonométriques, Soc.
Phys.- Math. Léeningr. 1 (1927) 233-239.

\bibitem {Landau}E. Landau, Über eine trigonometrische Summe, Nachr. Ges.
Wiss. Gött. (1928) 21-24.

\bibitem {Lehman}R.S. Lehman, On the distribution of zeros of the Riemann
zeta-function, Proc. Lond. Math. Soc. S3-20 (2) (1970) 303-320.

\bibitem {Lindelof}E. Lindelöf, Quelques remarques sur la croissance de la
fonction, Bull. Sci. Math., 32 (1908) 341--356.

\bibitem {Mossinghoff}M.J. Mossinghoff, T.S. Trudgian, A. Yang, Explicit
zero-free regions for the Riemann zeta-function, Res. Number Theory 10 (1)
(2024) xxxx

\bibitem {Patel}D. Patel, An explicit upper bound for $\left\vert \xi\left(
1+it\right)  \right\vert $, Indag. Math. 33 (5) (2022) 1012-1032.

\bibitem {PlattTrudgian}D.J. Platt, T.S. Trudgian, An improved explicit bound
on $\left\vert \xi\left(  1/2+it\right)  \right\vert $, J. Number Theory 147
(2015) 842-851.

\bibitem {Titchmarch}Edited and with a preface by E.C. Titchmarch, in: D.R.
Heath-Brown (Ed.), \textit{The theory of the Riemann zeta-function}, second
ed., The Clarendon Press Oxford University Press, New York, 1986.

\bibitem {Trudgian}T.S. Trudgian An improved upper bound for the argument of
the Riemann zeta-function on the critical line II, J. Number theory 134 (2014) 280-292.

\bibitem {Corput1}J.G. van der Corput, Neue zahlentheoretische Abschätzungen,
Math. Z. 29 (1929) 397-426.

\bibitem {Corput2}J.G. van der Corput, J.F. Koksma, Sur l'odre de grandeur de
la fonction $\xi\left(  s\right)  $ de Riemann dans la brande critique, Ann.
Toulouse (3) 22, 1930, 1-39.

\bibitem {Yang}A. Yang, Explicit bounds on $\xi\left(  s \right)  $ in the
critical strip and a zero-free region, J. Math. Anal. Appl. 534 (2) (2024)
128124.
\end{thebibliography}
\end{document}